\pgfplotsset{compat=1.17}
\newcommand{\enumlabelformat}{\roman}
\newcommand{\enumlabelfont}[1]{#1}
\newlength{\thelabelsep}
\setlist{labelsep=\thelabelsep}
\setlist[enumerate,1]{font=\enumlabelfont,label=(\enumlabelformat*),leftmargin=2.5em}
\setlist[itemize]{leftmargin=2.5em,label=$-$}
\newcounter{inlineenum}
\renewcommand{\theinlineenum}{\enumlabelformat{inlineenum}}
\let\oldnewtheorem\newtheorem
\RenewDocumentCommand{\newtheorem}{momo}{%
  \IfValueTF{#2}{%
    \AddToHook{env/#1/begin}{%
      \zcsetup{countertype={#2=#1}}}%
      \zcRefTypeSetup{#1}{
Name-sg = #3 ,%
      }%
    \oldnewtheorem{#1}[#2]{#3}%
  }{%
    \AddToHook{env/#1/begin}{
      \zcsetup{countertype={#1=#1}}}%
    \zcRefTypeSetup{#1}{
Name-sg = #3 ,%
      }%
    \IfValueTF{#4}{%
      \oldnewtheorem{#1}{#3}[#4]%
    }{%
      \oldnewtheorem{#1}{#3}%
    }%
  }%
}
\newcommand{\cref}[1]{\zcref{#1}}
\newcommand{\Cref}[1]{\zcref[S]{#1}}
\numberwithin{equation}{section}
\theoremstyle{definition}
\newtheorem{dfn}{Definition}[section]
\newtheorem{thm}[dfn]{Theorem}
\newtheorem{prop}[dfn]{Proposition}
\newtheorem{lem}[dfn]{Lemma}
\newtheorem{rem}[dfn]{Remark}
\newenvironment{acknowledgements}{%
\begin{abstract}
}{%
\end{abstract}
}
\DeclareMathOperator{\id}{id}
\DeclareMathOperator{\Int}{Int}
\DeclareMathOperator{\arsinh}{arsinh}
\let\epsilon\varepsilon 
\let\phi\varphi
\newcommand{\R}{\mathbb{R}}
\newcommand{\N}{\mathbb{N}}
\newcommand{\LpLS}{Lo\-rentz\-ian pre-length space }
\newcommand{\LpLSn}{Lo\-rentz\-ian pre-length space}
\newcommand*{\bx}{\bar{x}}
\newcommand*{\by}{\bar{y}}
\newcommand*{\bz}{\bar{z}}
\newcommand*{\bp}{\bar{p}}
\newcommand*{\bq}{\bar{q}}
\newcommand*{\ba}{\bar{a}}
\newcommand*{\bb}{\bar{b}}
\newcommand*{\bO}{\bar{O}}
\newcommand*{\bT}{\bar{T}}
\newcommand*{\bC}{\bar{C}}
\newcommand*{\tC}{\tilde{C}}
\newcommand*{\tO}{\tilde{O}}
\newcommand{\tp}{\tilde{p}}
\newcommand{\hx}{\hat{x}}
\newcommand{\hy}{\hat{y}}
\newcommand{\hz}{\hat{z}}
\newcommand{\hp}{\hat{p}}
\newcommand{\tz}{\tilde{z}}
\let\save@mathaccent\mathaccent
\newcommand*\if@single[3]{%
\setbox0\hbox{${\mathaccent"0362{#1}}^H$}%
\setbox2\hbox{${\mathaccent"0362{\kern0pt#1}}^H$}%
\ifdim\ht0=\ht2 #3\else #2\fi
}
\newcommand*\rel@kern[1]{\kern#1\dimexpr\macc@kerna}
\newcommand*\widebar[1]{\@ifnextchar^{{\wide@bar{#1}{0}}}{\wide@bar{#1}{1}}}
\newcommand*\wide@bar[2]{\if@single{#1}{\wide@bar@{#1}{#2}{1}}{\wide@bar@{#1}{#2}{2}}}
\newcommand*\wide@bar@[3]{%
\begingroup
\def\mathaccent##1##2{%
\let\mathaccent\save@mathaccent
\if#32 \let\macc@nucleus\first@char \fi
\setbox\z@\hbox{$\macc@style{\macc@nucleus}_{}$}%
\setbox\tw@\hbox{$\macc@style{\macc@nucleus}{}_{}$}%
\dimen@\wd\tw@
\advance\dimen@-\wd\z@
\divide\dimen@ 3
\@tempdima\wd\tw@
\advance\@tempdima-\scriptspace
\divide\@tempdima 10
\advance\dimen@-\@tempdima
\ifdim\dimen@>\z@ \dimen@0pt\fi
\rel@kern{0.6}\kern-\dimen@
\if#31
\overline{\rel@kern{-0.6}\kern\dimen@\macc@nucleus\rel@kern{0.4}\kern\dimen@}%
\advance\dimen@0.4\dimexpr\macc@kerna
\let\final@kern#2%
\ifdim\dimen@<\z@ \let\final@kern1\fi
\if\final@kern1 \kern-\dimen@\fi
\else
\overline{\rel@kern{-0.6}\kern\dimen@#1}%
\fi
}%
\macc@depth\@ne
\let\math@bgroup\@empty \let\math@egroup\macc@set@skewchar
\mathsurround\z@ \frozen@everymath{\mathgroup\macc@group\relax}%
\macc@set@skewchar\relax
\let\mathaccentV\macc@nested@a
\if#31
\macc@nested@a\relax111{#1}%
\else
\def\gobble@till@marker##1\endmarker{}%
\futurelet\first@char\gobble@till@marker#1\endmarker
\ifcat\noexpand\first@char A\else
\def\first@char{}%
\fi
\macc@nested@a\relax111{\first@char}%
\fi
\endgroup
}
\newcommand{\lm}[1]{\mathbb{L}^2(#1)}
\newcommand{\ma}{\ensuremath{\measuredangle}}
\newcommand{\mb}[1]{\mathbb{#1}}
\DeclarePairedDelimiterX\Set[1]{\lbrace}{\rbrace}%
{  #1 }
\title{Reshetnyak Majorisation and discrete upper curvature bounds for Lorentzian length spaces}
\author{Tobias Beran,\footnote{\href{mailto:tobias.beran@univie.ac.at}{tobias.beran@univie.ac.at}, Department of Mathematics, University of Vienna, Oskar-Morgenstern-Platz 1, 1090 Vienna, Austria. \,\orcidlink{0000-0002-2813-0099}} \:
Felix Rott\footnote{\href{mailto:frott@sissa.it}{frott@sissa.it}, SISSA, Via Bonomea 265, 34136 Trieste, Italy. \,\orcidlink{0000-0001-7314-0889}} \\ 
}
\begin{document}
\maketitle

\begin{abstract}
We present an analogue to the Majorisation Theorem of Reshetnyak in the setting of Lorentzian length spaces with upper curvature bounds: given two future-directed timelike rectifiable curves $\alpha$ and $\beta$ with the same endpoints in a \LpLS $X$, there exists a convex region in $\lm{K}$ bounded by two future-directed causal curves $\bar \alpha$ and $\bar \beta$ with the same endpoints and a 1-anti-Lipschitz map from that region into $X$ such that $\bar \alpha$ and $\bar \beta$ are respectively mapped $\tau$-length-preservingly onto $\alpha$ and $\beta$.
A special case of this theorem leads to an interesting characterisation of upper curvature bounds via four-point configurations which is truly suitable for a discrete setting. 
\bigskip

\noindent
\emph{Keywords:} Lorentzian length spaces, synthetic curvature bounds, Lorentzian geometry, metric geometry
\medskip

\noindent
\emph{MSC2020:} 53C50, 53C23, 53B30, 51K10, 53C80
\end{abstract}

\tableofcontents 

\section{Introduction}
It has been less than a decade since \cite{KS18} suggested the concept of Lorentzian length spaces and thus gave rise to the field of modern synthetic Lorentzian geometry. 
Originally motivated by low regularity phenomena in Lorentzian geometry and general relativity, and encouraged by the incredible impact that metric geometry had on Riemannian geometry, the field continues to develop and evolve at a very impressive rate. 
This work aims to add to this endeavour by giving a Lorentzian version of an important theorem about CAT($k$) spaces and using this to suggest a formulation for curvature bounds from above which is applicable in discrete spaces.

In positive signature, the original result \cite{Res68} is a fundamental theorem of spaces with upper curvature bounds, see \cite{AKP24} for a more modern treatment. 
It can be summarised as follows. 
Given a closed rectifiable curve $\gamma$ in a CAT($k$) space $X$, there exist: a closed curve $\bar \gamma$ in the model space $M_2(k)$ which bounds a convex region; and a 1-Lipschitz map from this region into $X$, such that the two curves are mapped onto each other in a length-preserving way. 

In the original formulation of \cite{Res68}, it is said that $\bar \gamma$ \emph{majorises} $\gamma$, hence also the name Majorisation Theorem for this result. 
Via Kolmogorov's Principle -- relating 1-Lipschitz maps to the non-increasement of area -- it can be seen as an isoperimetric result as well \cite{BH99}. 
See \cite{isoperimetricLLS} for similarly spirited results in the Lorentzian setting. 

The Lorentzian version of the theorem we propose in this work is as follows. 

\begin{thm}[Reshetnyak Majorisation, Lorentzian version]
Let $X$ be a strongly causal \LpLS with curvature bounded above by $K\in\mb R$ and let $U$ be a ($\leq K$)-comparison neighbourhood. 
Let $O \ll z$ with $\tau(O,z)<D_K$ in $U$ and let $\alpha$ and $\beta$ be two future-directed timelike rectifiable curves from $O$ to $z$ in $U$ forming a timelike loop $C$. 
Then there exists a causal loop $\tC$ in $\lm{K}$ bounding a convex region and a long map $f: R(\tC) \to U$ such that $\tC$ is mapped $\tau$-length preservingly onto $C$.
\end{thm}

On the one hand, this result is one of many bricks in the tall wall that is supporting the fundamental structure of the theory in general. 
On the other hand, the Majorisation Theorem carries with it a variety of curious and interesting applications. 
For example, it leads to a characterisation of upper curvature bounds which are applicable in discrete spaces, and it has been used, e.g., in \cite{KS93}. 
In the Lorentzian setting, discrete curvature bounds may be impactful for causal set theory, a discrete approach to quantum gravity \cite{BLMS87}. 
Other possible consequences include a lower bound on the length of causal curves and a Kirszbraun type theorem for anti-Lipschitz maps and will be discussed later on. 

\section{Preliminaries}
We refer to \cite{KS18, BKR24} for basic concepts about Lorentzian (pre-)length spaces, which we will in general simply denote by $X$. 
The metric $d$ on $X$ will never be used in our work, only the underlying topology plays a role. 
Thus, any result presented in this work retains its validity in any other setting of nonsmooth Lorentzian geometry, such as, e.g., metric spacetimes \cite{McC24, BM23+, Octet}, (bounded) Lorentzian metric spaces \cite{MS24, BMS24+} or weak and almost Lorentzian length spaces \cite{Mue23}. 
Some of these works assign the value $-\infty$ to pairs of points which are not causally related, which we will not do. 
However, wherever this is done, also the positive part of the time separation function is introduced and considered, hence in such a setting our results should be read with said positive part. 
We will denote the 2-dimensional Lorentzian model space of constant curvature $K$ by $\lm{K}$ and its (finite) diameter by $D_K$, cf.\ \cite[Definition 2.6]{BNR25}. 
In similarly standardised fashion, given $x \in X$, we decorate associated model space points in comparison triangles with a bar and those in four-point comparison configurations with a hat, i.e.\, $x, \bx, \hx$. 
This together with the fact that $K$ is fixed allows us to denote any time separation with the letter $\tau$ without specifying to which space it belongs. 
The points between which $\tau$ is measured will give all context to avoid any danger of confusion. 
Unless explicitly stated otherwise, we will assume any triangle $\Delta(x,y,z)$ to satisfy size bounds, i.e., $\tau(x,z) < D_K$.
Moreover, we will denote a geodesic between two points $x \ll y$ by $[x,y]$, and unless otherwise mentioned assume it to be parametrised in $[0,1]$ with constant speed. 
In spaces with upper curvature bounds, geodesics between points inside a comparison neighbourhood are unique, cf.\ \cite[Theorem 4.7]{BNR25}, so there is not even any ambiguity about this notation.
We may also use the notation $[x,y](t)$ to denote the point on $[x,y]$ at parameter $t \in [0,1]$. 
For the sake of completion, let us briefly recall the definition of triangle comparison. 

\begin{dfn}[Curvature bounds by triangle comparison]
\label{def: triangle comparison}
Let $X$ be a \LpLSn. An open subset $U$ is called a $(\leq K)$-comparison neighbourhood in the sense of \emph{triangle comparison} if:
\begin{enumerate}
\item $\tau$ is continuous on $(U\times U) \cap \tau^{-1}([0,D_K))$, and this set is open. 
\item $U$ is $D_K$-geodesic, i.e.\ for all $x \ll y$ in $U$ with $\tau(x,y) < D_K$ there exists a geodesic between those points inside $U$.
\item \label{TLCB.item3} Let $\Delta (x,y,z)$ be a timelike triangle in $U$, with $p,q$ two points on the sides of $\Delta (x,y,z)$. 
Let $\Delta(\bar{x}, \bar{y}, \bar{z})$ be a comparison triangle in $\lm{K}$ for $\Delta (x,y,z)$ and $\bar{p},\bar{q}$ comparison points for $p$ and $q$, respectively. 
Then 
\begin{equation}
\label{eq: timelike triangle comparison inequality}
\tau(p,q) \geq \tau(\bar{p}, \bar{q}) \, .
\end{equation}
\end{enumerate}
$X$ has curvature bounded above by $K$ in the sense of \emph{triangle comparison} if it is covered by such neighbourhoods.
\end{dfn}

Note that within a $(\leq K)$-comparison neighbourhood, $\bar{p} \ll \bar{q}$ implies $p \ll q$. 

\begin{dfn}[Curvature bounds by strict\footnote{\Cref{def: triangle comparison} and \Cref{def: strict triangle comparison} were respectively called timelike triangle comparison and strict causal triangle comparison in \cite{BKR24}. In this work we decided to drop the causality specifier.} triangle comparison]
\label{def: strict triangle comparison}
Let $X$ be a \LpLSn. An open subset $U$ is called a $(\leq K)$-comparison neighbourhood in the sense of \emph{strict triangle comparison} if:
\begin{enumerate}
\item $\tau$ is continuous on $(U\times U) \cap \tau^{-1}([0,D_K))$, and this set is open. 
\item $U$ is $D_K$-geodesic.
\item \label{TLCB.item3} Let $\Delta (x,y,z)$ be an admissible causal triangle\footnote{Recall that an \emph{admissible causal triangle} is a triangle (satisfying size bounds) in which one side is allowed to be null, cf.\ \cite[Remark 2.1]{BKR24}. Further note that it is not necessary to assume there exists a curve between the vertices which are null related (any of which would necessarily realise the distance), since we never choose (comparison) points on a null side. } in $U$, with $p,q$ two points on timelike sides of $\Delta (x,y,z)$. 
Let $\Delta(\bar{x}, \bar{y}, \bar{z})$ be a comparison triangle in $\lm{K}$ for $\Delta (x,y,z)$ and $\bar{p},\bar{q}$ comparison points for $p$ and $q$, respectively. 
Then 
\begin{equation}
\label{eq: strict triangle comparison inequality}
\tau(p,q) \geq \tau(\bar{p}, \bar{q}) \, \text{and}\, \bar{p}\leq\bar{q}\Rightarrow p\leq q \, .
\end{equation}
\end{enumerate}
$X$ has curvature bounded above by $K$ in the sense of \emph{strict triangle comparison} if it is covered by such neighbourhoods.
\end{dfn}

\begin{rem}[One-sided comparison]
\label{rem: one-sided comparison}
Further, recall that it is sufficient to require \eqref{eq: timelike triangle comparison inequality} only for pairs of points where one of them is a vertex of the triangle and the other lies on the opposite side. 
This condition is known as \emph{one-sided triangle comparison}, cf.\ \cite[Definition 3.2]{BKR24}. 
The same is true for \Cref{def: strict triangle comparison}, where the opposite side has to be timelike. 
Technically, this formulation was not introduced in \cite{BKR24}, but the obvious name would be strict one-sided comparison. 
Its equivalence to \Cref{def: strict triangle comparison} follows in the exact same spirit of \cite[Proposition 3.3]{BKR24}. 
\end{rem}

Perhaps the most powerful tool in Lorentzian triangle comparison theory is the so-called \emph{Law of Cosines Monotonicity}. 
In contrast to the metric setting, \emph{all} side lengths exhibit monotonous behaviour when an angle is changed while keeping the length of the other two sides fixed and vice versa. 
More precisely, we have the following, cf.\ \cite[Lemma 2.4 \& Remark 2.5]{BS23}. 

\begin{prop}[Law of Cosines (Monotonicity)]
\label{prop: law of cosines monotonicity}
Let $p, q, r \in \lm{K}$ form an admissible causal triangle (not necessarily in this order). 
Let $a = \max\{\tau (p, q), \tau (q, p)\} > 0$, $b = \max\{\tau (q, r), \tau (r, q)\} > 0$ and $c = \max \{\tau (p, r), \tau (r, p)\}$. 
Let $\omega=\ma_q(p,r)$ be the hyperbolic angle at $q$ and let $\sigma$ be its sign. 
Let $s=\sqrt{|K|}$. 
Then we have: 
\begin{align*}
a^2 + b^2 & = c^2 - 2ab\sigma \cosh(\omega) & K=0 \, , \\
\cos(sc) & = \cos(sa) \cos(sb) - \sigma \cosh(\omega) \sin(sa) \sin(sb) & K < 0 \, , \\
\cosh(sc) & = \cosh(sa) \cosh(sb) + \sigma \cosh(\omega) \sinh(sa) \sinh(sb) & K > 0 \, .
\end{align*}
In particular, fixing two sides and varying the third, $\omega$ is a strictly increasing function in the longest side and a strictly decreasing function in the other two sides. 
\end{prop}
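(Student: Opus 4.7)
The plan is to address the three law of cosines identities and the monotonicity statement separately. For the identities themselves, I would argue model space by model space using an ambient-space realisation of $\lm{K}$. In the flat case $K=0$, placing $q$ at the origin of $\lm{0}=(\R^2,-\diff t^2+\diff x^2)$ and expanding the bilinear form of $p-r=(p-q)-(r-q)$ reduces everything to the identities for the norms of the three displacement vectors together with the defining relation $\langle p-q,r-q\rangle=-\sigma ab\cosh(\omega)$ for the signed hyperbolic angle at $q$. For $K\neq 0$, realise $\lm{K}$ as a pseudosphere of appropriate signature in an ambient three-space and compute the relevant ambient inner products between the position vectors of $p$ and $r$ taken relative to $q$; the curvature enters through the radial exponential coordinates and produces the $\cos(s\cdot)$ or $\cosh(s\cdot)$ factors after rearrangement. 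The admissible case (one null side) can then be handled either by a limiting argument from timelike triangles, or by direct verification in each model.

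For the monotonicity, solve each law of cosines for $\sigma\cosh(\omega)$; in the flat case this gives
\[
\sigma\cosh(\omega)=\frac{c^2-a^2-b^2}{2ab},
\]
and the $K\neq 0$ cases are entirely analogous, with $\sin(sa)/s$ or $\sinh(sa)/s$ playing the role of an ``effective'' side length. The sign $\sigma$ is then pinned down by the relative sizes of the sides: $\sigma=+1$ forces $\sigma\cosh(\omega)\geq 1$ and hence $c\geq a+b$, so $c$ is strictly the longest side; while $\sigma=-1$ forces $\sigma\cosh(\omega)\leq -1$ and hence $c\leq|a-b|$, so one of $a,b$ is strictly the longest. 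Differentiating the explicit right-hand side separately in $a$, $b$, $c$ and using these inequalities to resolve the sign, one reads off case by case that $\sigma\cosh(\omega)$ moves in precisely the direction making $\omega$ strictly increase in the longest side and strictly decrease in the other two (recalling that $\cosh$ is strictly monotone on $[0,\infty)$).

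The main obstacle is sign bookkeeping: one must simultaneously keep track of $\sigma$, of the causal character of each side, and of which side is longest, while ensuring that the trigonometric and hyperbolic identities used in the $K\neq 0$ cases interact correctly with the built-in size bound $\tau<D_K$ (so that, e.g., $\sin(sc)$ and $\sinh(sc)$ remain positive). Once this setup is fixed, both the identities themselves and the monotonicity claim reduce to short algebraic computations.
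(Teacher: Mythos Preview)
The paper does not give its own proof of this proposition; it is stated with the reference \cite[Lemma 2.4 \& Remark 2.5]{BS23} and treated as background. Your proposal is a correct self-contained sketch of the standard argument: deriving the three identities from the ambient bilinear form (respectively the pseudosphere embedding for $K\neq 0$), then solving each identity for $\sigma\cosh(\omega)$ and differentiating, using the dichotomy $\sigma=+1\Leftrightarrow c\geq a+b$ versus $\sigma=-1\Leftrightarrow c\leq|a-b|$ (coming from $\cosh\omega\geq 1$) to decide which side is longest and resolve all signs. There is therefore nothing substantive to compare against in the paper itself; your approach is essentially what one would expect to find in the cited reference.
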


The original Majorisation Theorem deals with closed curves. 
Clearly, a closed causal curve is a priori an undesired -- and in reasonably behaved spaces even impossible -- object. 
We get around this by considering two, say, future-directed causal curves with the same endpoints. 
This can be regarded as a loop where one of the curves is traversed backwards.

\begin{dfn}[Causal closed curves and length preserving maps]
Let $X$ be a \LpLS and let $x,y \in X$ with $x \ll y$. 
Let $\alpha$ and $\beta$ be two future-directed causal curves between $x$ and $y$.
\begin{enumerate}
\item We refer to the pair $(\alpha, \beta)$ as a \emph{causal loop} 
and may denote it by $C$. 
If both curves are timelike then we call $C$ a \emph{timelike loop}. 
\item The \emph{length} of a causal loop $C=(\alpha, \beta)$ is $L(C) \coloneqq L_{\tau}(\alpha) + L_{\tau}(\beta)$. 
\item Two causal loops $C=(\alpha, \beta), C'= (\alpha', \beta')$ are \emph{$\tau$-length-isometric} if $L_\tau(\alpha|_{[s,t]})=L_\tau(\alpha'|_{[s,t]})$ and $L_\tau(\beta|_{[s,t]})=L_\tau(\beta'|_{[s,t]})$ holds for all $s < t$. 
If there exists a mapping $f$ such that $C=(\alpha, \beta)$ and $C''=(f \circ \alpha, f \circ \beta)$ are $\tau$-length-isometric, then we say that $C$ is mapped\footnote{In his original work, Reshetnyak called such closed curves \emph{equilongal}.} \emph{$\tau$-length preservingly} onto $C''$. 
\item The \emph{region} associated to a causal loop $C$ in $\lm{K}$ is the closure of the connected component of the set $\lm{K}\setminus(\alpha\cup\beta)$ that is contained in $J(x,y)$. 
It is denoted by $R(C)$. 
Note that we will always consider regions to be equipped with their intrinsic time separation. 
If $R(C)$ is convex, then the intrinsic time separation agrees with the restricted one. 
\end{enumerate}
\end{dfn}

Next, we introduce the concept of anti-Lipschitz maps. 
In essence, one reverses the inequality in the Lipschitz condition and replaces $d$ by $\tau$. 

\begin{dfn}[Anti-Lipschitz maps]
Let $X,Y$ be \LpLSn s and let $f:U\subseteq X \to Y$ be a map on any subset. 
$f$ is called \emph{anti-Lipschitz} if there exists $K>0$ such that $K\tau_X(x,y) \leq \tau_Y(f(x),f(y))$ for all $x,y \in U$. 
The largest such $K$ is called the \emph{anti-Lipschitz constant} of $f$. 
We call $f$ \emph{strongly anti-Lipschitz} if it is anti-Lipschitz and $\leq$-preserving\footnote{Observe that this terminology is conflicting with \cite{BR24}, where $\leq$-preserving meant that the converse implication is in place as well. 
We have since adopted the terminology of splitting these two properties and calling them $\leq$-preserving and $\leq$-reflecting, respectively.}, i.e., $x \leq_X y \Rightarrow f(x) \leq_Y f(y)$.
In analogy to 1-Lipschitz maps in metric geometry being called short, we call $f$ \emph{long} if $\tau_X(x,y)\leq\tau_Y(f(x),f(y))$ for all $x,y \in U$, i.e.\ it is $1$-anti-Lipschitz. 
Finally, we call $f$ \emph{strongly long} if it is long and $\leq$-preserving. 
\end{dfn}

If one restricts the target space to be $\R$ considered as the `Minkowski line' $\R^{1,0}$, then one arrives at the concept of \emph{steep} functions as in, e.g., \cite[Definition 3.4]{Octet}. 
In this way, anti-Lipschitzness provides a generalisation of steepness. 

\section{Majorisation}

In this section we prove a Lorentzian version of the Majorisation Theorem. 
At first, we will give a very elementary definition, mostly to make formulations down the line more easily understandable. 

\begin{dfn}[Lorentzian spheres]
Let $X$ be a \LpLSn. 
By a \emph{future-directed hyperbola with radius $r$ and centre $x$} we mean the set of points $\{ y \in X \mid \tau(x,y) =r \}$. 
We will denote it by $H_r^+(x)$. 
Analogously, we define a \emph{past-directed hyperbola}. 
\end{dfn}

We start with collecting some initially helpful result. 
The following lemma is elementary, we state it mostly to make referencing to its argument in the following proofs more convenient. 

\begin{lem}[Maximising distances in model space]
\label{lem: Maximising distances in model space}
Let $p \in \lm{K}$. 
\begin{enumerate}
\item Let $z_1, z_2 \in H^+_r(p)$ with $z_1 \neq x_2$. 
Consider the (timelike) angle bisector $L$ of the angle $\ma_p(z_1,z_2)$. 
Suppose that $y \in \lm{K}$ is in the interior of the half space generated by $L$ that contains $z_2$. 
Then $\max\{\tau(y,z_1),\tau(z_1,y)\} < \max\{\tau(y,z_2),\tau(z_2,y)\}$. 

\item If $y_0 \in H^+_{r'}(p)$ with $r' < r$ and $z_0 \in H^+_r(p)$, then
$\tau(y_0,z_0)$ is strictly monotonically decreasing in $\ma_p(y_0,z_0)$ (or equivalently in any other angle). 
In particular, $\tau(y_0,z_0)=\max\{\tau(y,z) \mid y \in H^+_{r'}(p), z \in H^+_r(p)\}$ if and only if $y_0 \in [p,z_0]$. 
\end{enumerate}
\end{lem}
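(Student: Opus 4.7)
For part (ii), I would apply \Cref{prop: law of cosines monotonicity} to the triangle $\Delta(p, y_0, z_0)$ in $\lm{K}$. With the sides $\tau(p, y_0) = r'$ and $\tau(p, z_0) = r$ fixed, the Law of Cosines places $\tau(y_0, z_0)$ and the angle $\omega = \ma_p(y_0, z_0)$ in strict monotonic correspondence. The direction is pinned down by examining the extreme $\omega = 0$: here $y_0$ and $z_0$ lie on a common future-directed timelike geodesic emanating from $p$ and, since $r' < r$, we obtain $y_0 \in [p, z_0]$ with $\tau(y_0, z_0) = r - r'$ via the Lorentzian reverse triangle inequality. Any $\omega > 0$ renders the reverse triangle inequality strict, so $\tau(y_0, z_0) < r - r'$. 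Hence $\tau(y_0, z_0)$ is strictly decreasing in $\omega$, attaining its unique maximum precisely when $y_0 \in [p, z_0]$.

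For part (i), the plan is to exploit the reflective symmetry of $\lm{K}$ about $L$. The timelike angle bisector $L$ of $\ma_p(z_1, z_2)$ is a timelike geodesic through $p$, and the reflection $\rho$ across $L$ is an isometry of $\lm{K}$ that fixes $p$ and preserves $H^+_r(p)$; since $z_1$ and $z_2$ are symmetric across $L$, we have $\rho(z_1) = z_2$. Setting $y' := \rho(y)$, the isometry yields $\tau(y, z_1) = \tau(y', z_2)$ and $\tau(z_1, y) = \tau(z_2, y')$, so the claim reduces to proving $\max\{\tau(y', z_2), \tau(z_2, y')\} < \max\{\tau(y, z_2), \tau(z_2, y)\}$. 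I would then pass to coordinates on $\lm{K}$ in which $L$ is the time axis and $p$ the origin, so that $y = (t, s)$ with $s > 0$, $y' = (t, -s)$, and $z_2$ has positive spatial coordinate. Since the squared Lorentzian distance is the squared time separation minus the squared spatial separation, the spatial separation of $y'$ from $z_2$ strictly exceeds that of $y$ from $z_2$ while their time separations coincide, whence the inequality.

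The step I expect to require the most care is the strict inequality in part (i) in the degenerate case where $y$ is spacelike to $z_2$: both maxima can then vanish, and the stated strict inequality has content only when at least one of the pairs $(y, z_2),(z_2, y)$ is causally related. In the anticipated applications of the lemma $y$ arises from a causally structured configuration so that this holds automatically. For $K \neq 0$, the same strategy applies using the standard realisation of $\lm{K}$ as a quadric in $\R^{2,1}$, since $\rho$ remains available as an ambient isometric reflection that fixes $L$ and restricts appropriately, and the comparison of squared Lorentzian distances is then carried out in the corresponding hyperbolic or trigonometric coordinates.
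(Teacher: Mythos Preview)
Your proposal is correct and follows essentially the same route as the paper: part~(ii) is exactly the paper's one-line appeal to \Cref{prop: law of cosines monotonicity}, and for part~(i) the paper likewise normalises so that $L$ is the time axis (your reflection $\rho$ is implicit there) and compares the squared Lorentzian intervals directly, deferring $K\neq 0$ to ``the same logic.'' Your caution about the degenerate spacelike case is well placed---the paper's displayed inequality is really a strict inequality of signed squared intervals, from which the $\tau$-statement and the causal implication $y\leq z_1\Rightarrow y\leq z_2$ are then read off, so strictness at the level of $\tau$ indeed only has content when at least one pair is causally related.
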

\begin{proof}
(i) We will only treat the case $K=0$, the other cases follow with the exact same logic. 
By applying a suitable Lorentz transformation, we can assume without loss of generality that $p=0$ and $z_1,z_2$ lie on the same horizontal axis, i.e., $z_1=(t_z,-x_z)$ and $z_2=(t_z,x_z)$, with, say, $x_z>0$. 
Then by assumption we also have $x_y > 0$ for $y=(t_y,x_y)$. 
Thus we simply compute
\begin{align*}
\max\{\tau(y,z_1),\tau(z_1,y)\} & = (t_z-t_y)^2-(x_z-x_y)^2  \\
& < (t_z-t_y)^2-(-x_z-x_y)^2 = \max\{\tau(y,z_2),\tau(z_2,y)\} \, .
\end{align*}
Note that the strict inequality implies the preservation of any causal relation, i.e., e.g., $y \leq z_1 \Rightarrow y \leq z_2$. In fact, even $y \leq z_1 \Rightarrow y \ll z_2$ holds true. 

(ii) This follows immediately with an application of \Cref{prop: law of cosines monotonicity}. 
Note that $\ma_p(y_0,z_0)=0$ if and only if $y_0 \in [p,z_0]$. 
\end{proof}

A past-directed version of \Cref{lem: Maximising distances in model space} follows analogously. 
The next lemma makes can be regarded as a `filled in' version of Alexandrov's Lemma and makes use of an elementary construction that we will define now. Towards readability, it is convenient to introduce it separately from the proof. 

\begin{dfn}[Hyperbolic sector]
Let $x\in\lm{K}$ and let $y,z$ be points on a hyperbola with centre $x$ (either both in the future or both in the past). 
That is, either $\tau(x,y)=\tau(x,z) > 0$ or $\tau(y,x)=\tau(z,x) >0$. 
Then the (closure of the) region bounded by $[x,y], [x,z]$ and the arc of the hyperbola between $y$ and $z$ will be called the \emph{hyperbolic sector between $[x,y]$ and $[x,z]$. }
\end{dfn}

\begin{lem}[Anti-Lipschitz property of straightened Alexandrov situation]
\label{lem: anti lip alexlem}
Let $x_1 \ll x_2 \ll x_3 \ll x_4$ be four points in $\lm{K}$ (we may refer to such a constellation as a timelike quadrilateral) such that $x_2$ and $x_4$ are on opposite sides of the line generated by the segment $[x_1,x_3]$, i.e.\ the triangles $\Delta(x_1,x_2,x_3)$ and $\Delta(x_1,x_3,x_4)$ do not overlap. 
Assume the quadrilateral is concave at $x_3$, which can be expressed in Lorentzian terms as $\ma_{x_3}(x_1,x_2) \geq \ma_{x_3}(x_1,x_4)$. 
Let $\Delta(\bx_1,\bx_2,\bx_4)$ be a timelike triangle in $\lm{K}$ such that $\tau(\bx_1,\bx_2)=\tau(x_1,x_2), \tau(\bx_1,\bx_4)=\tau(x_1,x_4)$ and $\tau(\bx_2,\bx_4)=\tau(x_2,x_3)+\tau(x_3,x_4)$. 
Then there exists a strongly long map $\varphi$ from the filled in triangle $\Delta(\bx_1,\bx_2,\bx_4)$ into the filled in concave quadrilateral given by the vertices $x_1 \ll x_2 \ll x_3 \ll x_4$ such that the boundaries are mapped in a $\tau$-length-preserving way when viewed as timelike loops. 
\end{lem}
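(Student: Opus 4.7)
The plan is to introduce an auxiliary point $\bar x_3$ on the segment $[\bar x_2, \bar x_4]$ with $\tau(\bar x_2, \bar x_3) = \tau(x_2, x_3)$, which automatically forces $\tau(\bar x_3, \bar x_4) = \tau(x_3, x_4)$. The segment $[\bar x_1, \bar x_3]$ then splits the filled triangle $\Delta(\bar x_1, \bar x_2, \bar x_4)$ into two sub-triangles $\Delta(\bar x_1, \bar x_2, \bar x_3)$ and $\Delta(\bar x_1, \bar x_3, \bar x_4)$, to be mapped onto $\Delta(x_1, x_2, x_3)$ and $\Delta(x_1, x_3, x_4)$ respectively. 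The boundary behaviour of $\varphi$ then falls out from $\tau$-isometric identifications edge by edge: $[\bar x_1, \bar x_2] \to [x_1, x_2]$, $[\bar x_2, \bar x_3] \to [x_2, x_3]$, $[\bar x_3, \bar x_4] \to [x_3, x_4]$, and $[\bar x_1, \bar x_4] \to [x_1, x_4]$.

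The first key step is the diagonal comparison $\tau(\bar x_1, \bar x_3) \leq \tau(x_1, x_3)$. Since the triangles $\Delta(\bar x_1, \bar x_2, \bar x_3)$ and $\Delta(x_1, x_2, x_3)$ share two matched side lengths by construction, \Cref{prop: law of cosines monotonicity} tells us this inequality is equivalent to the angle inequality $\measuredangle_{\bar x_2}(\bar x_1, \bar x_3) \leq \measuredangle_{x_2}(x_1, x_3)$. I would establish the angle inequality by placing $\bar x_1 = x_1$ and $\bar x_2 = x_2$ via a rigid motion of $\lm{K}$ and then tracking how $\bar x_3$ sits relative to $x_3$ as the broken geodesic $x_2 \to x_3 \to x_4$ is straightened. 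The concavity hypothesis $\measuredangle_{x_3}(x_1, x_2) \geq \measuredangle_{x_3}(x_1, x_4)$, combined with iterated application of Law of Cosines Monotonicity (to pin down the angles at $x_2$ and $x_4$ of the respective sub-triangles and then read off the angle at $\bar x_2$ of the straightened triangle via its known side lengths), should give the inequality in the required direction.

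Given the diagonal comparison, I would define $\varphi$ piecewise. On $[\bar x_1, \bar x_3]$, send each point to the $\tau$-proportionally corresponding point on $[x_1, x_3]$; this is long on the diagonal since $\tau(\bar x_1, \bar x_3) \leq \tau(x_1, x_3)$. On each sub-triangle extend $\varphi$ by the natural angle-opening map from the apex $\bar x_2$ (respectively $\bar x_4$) that preserves $\tau$-distance from the apex and dilates the hyperbolic angle there by the factor required to match the target. Longness within each sub-triangle then reduces to monotonicity of $\tau$-distance in the angular variable, again by \Cref{prop: law of cosines monotonicity}. The two pieces agree on the shared diagonal by construction, yielding a continuous long map on all of $\Delta(\bar x_1, \bar x_2, \bar x_4)$.

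The main obstacle I expect is the $\leq$-preserving property, rather than mere longness: one must check that causal -- not only timelike -- relations between pairs of points, especially pairs living in different sub-triangles, survive under the angle-opening. I would argue this by a direct geometric check in $\lm{K}$ showing that the angle-opening sends the future cone of any point into the future cone of its image, supplemented by a limiting argument to transfer the strict timelike comparison onto null boundary cases.
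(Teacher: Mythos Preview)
Your first two steps --- introducing $\bar x_3$ on $[\bar x_2,\bar x_4]$ and establishing the diagonal inequality $\tau(\bar x_1,\bar x_3)\leq\tau(x_1,x_3)$ --- match the paper exactly. The problem is in how you build the map $\varphi$.

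First, the ``angle-opening map from the apex $\bar x_2$'' is not well-defined on all of the sub-triangle $\Delta(\bar x_1,\bar x_2,\bar x_3)$. Since $\bar x_1\ll\bar x_2\ll\bar x_3$, the vertex $\bar x_2$ is the timelike-\emph{middle} vertex, and the filled triangle contains points spacelike separated from $\bar x_2$ (a thin strip near the past and future light cones of $\bar x_2$). For such points there is no hyperbolic angle at $\bar x_2$ to dilate, and ``preserve $\tau$-distance from $\bar x_2$'' is vacuous because that distance is zero.

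Second, even on the portion where both angle-dilation maps are defined, they do \emph{not} send the diagonal $[\bar x_1,\bar x_3]$ to the geodesic $[x_1,x_3]$, only its endpoints. An angular dilation from $\bar x_2$ (or $\bar x_4$) by a factor $\lambda\neq 1$ is not an isometry of $\lm{K}$, so it does not take geodesics to geodesics; the image of $[\bar x_1,\bar x_3]$ is some curve joining $x_1$ and $x_3$. The two sub-triangle maps therefore produce \emph{different} curves along the diagonal, and neither agrees with your separately prescribed $\tau$-proportional map onto $[x_1,x_3]$. Your claim that ``the two pieces agree on the shared diagonal by construction'' is simply false.

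The paper avoids both obstacles by a quite different decomposition. Rather than splitting $\Delta(\bar x_1,\bar x_2,\bar x_4)$ along $[\bar x_1,\bar x_3]$, it embeds \emph{isometric} copies $T_2\cong\Delta(x_1,x_2,x_3)$ and $T_4\cong\Delta(x_1,x_3,x_4)$ into $\Delta(\bar x_1,\bar x_2,\bar x_4)$, attached along the outer sides $[\bar x_1,\bar x_2]$ and $[\bar x_1,\bar x_4]$ respectively; the Alexandrov angle inequalities at $\bar x_1,\bar x_2,\bar x_4$ guarantee these fit without overlapping. The complement consists of three hyperbolic sectors centred at $\bar x_1,\bar x_2,\bar x_4$, and $\varphi$ is defined to be the isometry on $T_2\cup T_4$ and the radial collapse onto the appropriate edge on each sector. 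Longness is then checked by a (lengthy) case analysis using \Cref{lem: Maximising distances in model space}. The key point is that no gluing along the unknown diagonal is required: each sector is collapsed onto a segment that already lies on the boundary of an isometrically embedded piece.
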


\begin{proof}
Let $\bx_3 \in [\bx_2,\bx_4]$ be such that $\tau(\bx_2,\bx_3)=\tau(x_2,x_3)$ (and hence also $\tau(\bx_3,\bx_4)=\tau(x_3,x_4)$). 
In typical fashion of Alexandrov's Lemma, we first show $\tau(x_1,x_3) \geq \tau(\bx_1,\bx_3), \ma_{x_1}(x_2,x_3) \geq \ma_{\bx_1}(\bx_2,\bx_3), \ma_{x_1}(x_3,x_4) \leq \ma_{\bx_1}(\bx_3,\bx_4), \ma_{x_2}(x_1,x_3)\geq\ma_{\bar x_2}(\bar x_1,\bar x_3)$ and $\ma_{x_4}(x_1,x_3)\leq\ma_{\bar x_4}(\bar x_1,\bar x_3)$. 
By the reverse triangle inequality, we have $\tau(x_2,x_4) \geq \tau(x_2,x_3)+\tau(x_3,x_4)=\tau(\bx_2,\bx_4)$. 
Via Law of Cosines Monotonicity, this yields 
\begin{align*}
\ma_{x_1}(x_2,x_3) + \ma_{x_1}(x_3,x_4) & = \ma_{x_1}(x_2,x_4) \\
& \leq \ma_{\bx_1}(\bx_2,\bx_4) = \ma_{\bx_1}(\bx_2,\bx_3) + \ma_{\bx_1}(\bx_3,\bx_4) \, .
\end{align*}  
By definition of hyperbolic angles, we have $\ma_{\bx_3}(\bx_1,\bx_2)=\ma_{\bx_3}(\bx_1,\bx_4)$. 
If we were to have $\tau(x_1,x_3) < \tau(\bx_1,\bx_3)$, then the Law of Cosines Monotonicity applied to the sub triangles containing $x_3$ and $\bx_3$ together with the concavity of the quadrilateral would yield 
\begin{equation*}
\ma_{\bx_3}(\bx_1,\bx_2) > \ma_{x_3}(x_1,x_2) \geq \ma_{x_3}(x_1,x_4) > \ma_{\bx_3}(\bx_1,\bx_4) = \ma_{\bx_3}(\bx_1,\bx_2) \, ,
\end{equation*}
a contradiction. 
From the inequality $\tau(x_1,x_3) \geq \tau(\bx_1,\bx_3)$ we further obtain $\ma_{x_1}(x_2,x_3) \geq \ma_{\bx_1}(\bx_2,\bx_3)$ and $\ma_{x_1}(x_3,x_4) \leq \ma_{\bx_1}(\bx_3,\bx_4)$ as well as $\ma_{x_2}(x_1,x_3)\geq\ma_{\bar x_2}(\bar x_1,\bar x_3)$ and $\ma_{x_4}(x_1,x_3)\leq\ma_{\bar x_4}(\bar x_1,\bar x_3)$ by Law of Cosines Monotonicity. 

The inequalities $\ma_{x_1}(x_2,x_4) \leq \ma_{\bx_1}(\bx_2,\bx_4), \ma_{x_2}(x_1,x_3)\geq\ma_{\bar x_2}(\bar x_1,\bar x_3)$ and $\ma_{x_4}(x_1,x_3)\leq\ma_{\bar x_4}(\bar x_1,\bar x_3)$ imply that one can place isometric copies of the sub triangles $\Delta(x_1,x_2,x_3)$ and $\Delta(x_1,x_3,x_4)$ inside $\Delta(\bx_1,\bx_2,\bx_4)$ attached at the sides $[\bx_1,\bx_2]$ and $[\bx_1,\bx_4]$, respectively, such that they do not overlap. 
Denote the vertex corresponding to $x_3$ that belongs to the isometric copy of $\Delta(x_1,x_2,x_3)$ by $\bx_3'$ and the one belonging to the isometric copy of $\Delta(x_1,x_3,x_4)$ by $\bx_3''$. 
Denote the convex hulls of those isometric copies by $T_2$ and $T_4$, respectively. 
In other words, $T_2$ is the `filled in' triangle $\Delta(\bx_1,\bx_2,\bx_3')$ and $T_4$ is the filled in triangle $\Delta(\bx_1,\bx_3'',\bx_4)$. 

We will now build three hyperbolic sectors. 
Denote by $H_1$ the hyperbolic sector between $[\bx_1,\bx_3']$ and $[\bx_1, \bx_3'']$, by $H_2$ the hyperbolic sector between $[\bx_2,\bx_3]$ and $[\bx_2,\bx_3']$ and by $\tilde H_4$ the hyperbolic sector between $[\bx_3,\bx_4]$ and $[\bx_3'',\bx_4]$. 
Note that due to the nature of the construction, the hyperbola associated to $\tilde H_4$ will meet the interior of the triangle $\Delta(\bx_1,\bx_2,\bx_3')$ and hence $\tilde H_4$ overlaps with the other two. 
We set $H_4 = \tilde H_4 \setminus (H_1 \cup H_2)$ and allow ourselves to refer to this set as a hyperbolic sector too. 
This is to ensure that we have three disjoint sets (up to the shared boundary $[\bx_1,\bx_3']$ by $H_1$ and $H_2$), which will make the following arguments a bit easier. 
See \Cref{fig: anti-lipschitz alexlem} for a visual overview of this construction. 
\begin{figure}
\begin{center}
\definecolor{darkgreen}{rgb}{0,0.4,0.2}
\begin{tikzpicture}
\draw  (0.31172954031303624,2.9046166811518455)-- (3.0249581094073865,7.100043866570487);
\draw  (3.0249581094073865,7.100043866570487)-- (3,0);
\draw  (3,0)-- (0.31172954031303624,2.9046166811518455);
\draw  (0.31172954031303624,2.9046166811518455)-- (1.8932275251665471,4.634106743596759);
\draw  (1.8932275251665471,4.634106743596759)-- (3,0);
\draw  (3,0)-- (2.4462664677109376,4.53394098161647);
\draw  (2.4462664677109376,4.53394098161647)-- (3.0249581094073865,7.100043866570487);
\draw [dotted] (0.9052482898024268,3.822366377962175)-- (3,0);
\draw [samples=50,domain=-0.75:0.1,rotate around={90:(0.31172954031303624,2.9046166811518455)},xshift=0.31172954031303624cm,yshift=2.9046166811518455cm,color=blue] plot ({0.7*(1+(\x)^2)/(1-(\x)^2)},{0.7*2*(\x)/(1-(\x)^2)});
\draw [samples=50,domain=-0.19:0.19,rotate around={90:(3,0)},xshift=3cm,yshift=0cm,color=darkgreen] plot ({4.5*(1+(\x)^2)/(1-(\x)^2)},{4.5*2*(\x)/(1-(\x)^2)});
\draw [samples=50,domain=-0.39:0.19,rotate around={90:(3.0249581094073865,7.100043866570487)},xshift=3.0249581094073865cm,yshift=7.100043866570487cm,color=red] plot ({2.5*(-1-(\x)^2)/(1-(\x)^2)},{2.5*(-2)*(\x)/(1-(\x)^2)});
\begin{scriptsize}
\coordinate [circle, fill=black, inner sep=0.7pt, label=270: {$x_1$}] (x1) at (0-1,0);
\coordinate [circle, fill=black, inner sep=0.7pt, label=180: {$x_2$}] (x2) at (-2.3663018065581007-1,2.609479687546951);
\coordinate [circle, fill=black, inner sep=0.7pt, label=135: {$x_3$}] (x3) at (-0.5713218393200059-1,4.536122644294794);
\coordinate [circle, fill=black, inner sep=0.7pt, label=90: {$x_4$}] (x4) at (-0.002578052400444981-1,7.1000004680530955);
\coordinate [circle, fill=black, inner sep=0.7pt, label=270: {$\bar x_1$}] (bx1) at (3,0);
\coordinate [circle, fill=black, inner sep=0.7pt, label=180: {$\bx_2$}] (bx2) at (0.31172954031303624,2.9046166811518455);
\coordinate [circle, fill=black, inner sep=0.7pt, label=90: {$\bar x_4$}] (bx4) at (3.0249581094073865,7.100043866570487);
\coordinate [circle, fill=black, inner sep=0.7pt, label=135: {$\bar x_3$}] (bx3) at (0.9052482898024268,3.822366377962175);
\coordinate [circle, fill=black, inner sep=0.7pt, label=90: {$\bar x_3'$}] (bx3') at (1.8932275251665471,4.634106743596759);
\coordinate [circle, fill=black, inner sep=0.7pt, label=45: {$\bar x_3''$}] (bx3'') at (2.4462664677109376,4.53394098161647);
\coordinate [label={[darkgreen]90: {$H_1$}}] (h1) at (2.4,3.2);
\coordinate [label={[blue]90: {$H_2$}}] (h2) at (0.3,3.2);
\coordinate [label={[red]225: {$H_4$}}] (h4) at (2.8,6.1);
\coordinate [label={225: {$T_2$}}] (t2) at (2,3.5);
\coordinate [label={225: {$T_4$}}] (t4) at (3.1,4);
\end{scriptsize}
\draw  (x1) -- (x2) -- (x3);
\draw (x1) -- (x3) -- (x4) -- (x1);
\end{tikzpicture}
\end{center}
\caption{The three hyperbolic sectors $H_1, H_2$ and $H_4$ and the isometrically copied triangles $T_2$ and $T_4$ are inscribed in the straightened out Alexandrov configuration on the right hand side.}
\label{fig: anti-lipschitz alexlem}
\end{figure}
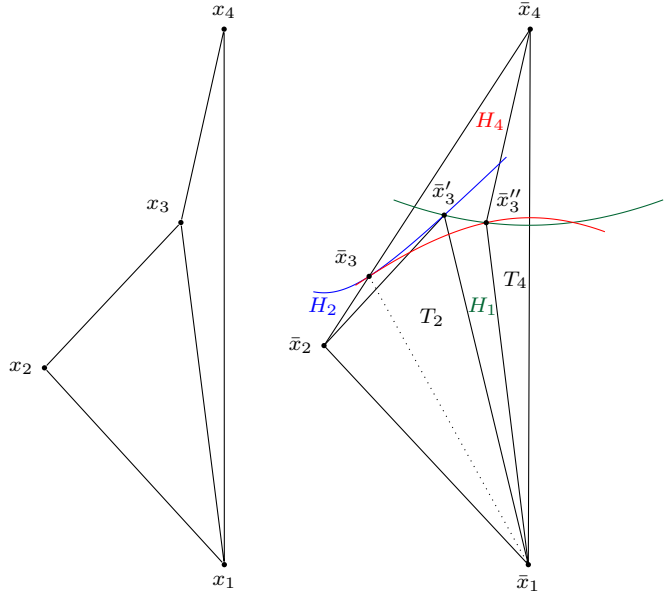
Observe that the two isometric copies of the sub triangles together with $H_1, H_2$ and $H_4$ cover the whole triangle $\Delta(\bx_1,\bx_2,\bx_4)$. 

We now define the long map $\varphi$ as follows. 
Any point in $T_2$ or $T_4$ will be mapped to its corresponding point in the filled in triangle $\Delta(x_1,x_2,x_3)$ or $\Delta(x_1,x_3,x_4)$ using the isometry, respectively. 
Any point in $H_1, H_2$ or $H_4$ will be mapped onto the associated segment from the centre to $x_3$ in the concave quadrilateral at the appropriate distance from the centre. 
For example, if $p\in H_1$ and $\tau(\bx_1,p) = r$ then $p$ will be mapped to a point $\tilde p$ on $[x_1,x_3]$ such that $\tau(x_1,\tilde p)=r$. 

We check that this assignment is strongly long by distinguishing several cases. 
Some arguments are made clearer by viewing $\varphi(p)$ as being in the appropriate part of $\Delta(\bx_1,\bx_2,\bx_4)$, hence we will do so. 
If both points are in $T_2$ or both in $T_4$, then the assignment is $\tau$-isometric. 
If they are both in either $H_1, H_2$ or $H_4$ then applying \Cref{lem: Maximising distances in model space}(ii) once yields the claim. 
Now suppose one point is in a triangle and the other one in an adjacent hyperbolic sector. 
Assume first $p \in T_2$ and $r \in H_1$. 
Then also in this case \Cref{lem: Maximising distances in model space}(ii) gives the claim. 
If $p \in T_2$ and $r \in H_2$, then we apply \Cref{lem: Maximising distances in model space}(i). 
By symmetry, we can apply the same argument if one point is in $T_4$ and the other one in $H_1$ or $H_4$. 
The same line of thought is also valid when the two points are in $T_2$ and $T_4$, respectively. 

Now suppose $p \in H_2$ and $r \in H_4$, then $p \leq r$. 
Consider the segment from $r$ to $\bx_2$. 
Denote by $p_1$ the point on $[\bx_2,r]$ with the same $\tau$-distance to $\bx_2$ as $p$, and by $r_1$ the point on $[p_1,\bx_4]$ with the same $\tau$-distance to $\bx_4$ as $r$. 
Continue this iteratively to obtain a sequence of points $p_n \in [\bx_2,r_{n-1}]$ and $r_n \in [p_n,\bx_4]$ (by setting $r_0=r$, if you will). 
Clearly, both sequences converge to points on $[\bx_2,\bx_3] $ and $[\bx_3,\bx_4]$ with the appropriate $\tau$-distances from $\bx_2$ and $\bx_4$, respectively, and the sequence is increasing in $\tau$, i.e., $\tau(p_n,r_n) \leq \tau(p_{n+1},r_{n+1})$, by applying \Cref{lem: Maximising distances in model space} repeatedly. 
In particular, $p_n\to\varphi(p)$ and $r_n\to\varphi(r)$ and thus $\tau(p_n,r_n) \to \tau(\varphi(p),\varphi(r))$. 
Moreover, since we started with $p \leq r$ and $\leq$ is closed, we also end up with $\varphi(p) \leq \varphi(q)$. 
Similarly, if $p \in H_1$ and $r \in H_4$, then $p \leq r$, and we can proceed in the same way, except that in this case the sequences $p_n$ and $r_n$ would eventually converge to points on $[\bx_1,\bx_4]$, which is not where $\varphi(p)$ and $\varphi(q)$ lie. 
However, we simply stop the procedure as soon as we would `cross' into $T_4$. 
More precisely, say in the $i$-th step $p_i$ would cross into $T_4$. 
Then we instead choose as $p_{i}$ the point \emph{on} $[\bx_1,\bx_3'']$ with the correct distance, which is exactly $\varphi(p)$. 
Applying a similar rule to $r_i$ yields $\varphi(r)$ and hence the claim.
  
If $p \in H_1, r \in H_2$ and $p \leq r$, then by treating $\varphi(r)$ as being in (the boundary of) $T_2$, we obtain $\tau(p,r) \leq \tau(p,\varphi(r))$ via \Cref{lem: Maximising distances in model space}(i). 
Then \Cref{lem: Maximising distances in model space}(ii) can be applied to $p$ to obtain the desired inequality. 
An analogous argument holds if $r \leq p$.  

Finally, assume one point is in a triangle and the other in a sector not adjacent to that triangle. 
Say $p \in T_2, r \in H_4$ and $p \leq r$. 
Consider the connecting segment $[p,r]$ and denote by $q$ the point where it leaves the triangle $T_2$. 
Clearly, $\tau(p,r)=\tau(p,q) + \tau(q,r) = \tau(\varphi(p),\varphi(q)) + \tau(q,r)$, so it suffices to show $\tau(q,r) \leq \tau(\varphi(q),\varphi(r))$. 
If $q \in [\bx_1,\bx_3']$ then there is a corresponding point $q' \in [\bx_1, \bx_3'']$ (in the sense of having the same $\tau$-distance from $\bx_1$ such that $\varphi(q)=\varphi(q')$), and we find ourselves in the previous case of one point in (the boundary of) a triangle and the other in the adjacent sector. 
If $q \in [\bx_2,\bx_3']$, then we are in the case of $q \in H_2, r \in H_4$. 
Thus, $\tau(q,r) \leq \tau(\varphi(q),\varphi(r))$ and in total 
\begin{align*}
\tau(p,r)& =\tau(p,q) + \tau(q,r) = \tau(\varphi(p),\varphi(q)) + \tau(q,r) \\
& \leq \tau(\varphi(p),\varphi(q)) + \tau(\varphi(q),\varphi(r)) \leq \tau(\varphi(p),\varphi(r)) \, .
\end{align*}

This covers all possible cases. All of this also works for the implication of the causal relation. 
\end{proof}

\begin{rem}[On generalisations of Alexandrov configurations]
\label{rem: null anti lip alexlem} 
The previous lemma can be generalised by allowing some of the sides to be null: 
(i) If $x_1 \leq x_2$ are null related, we consider $x_2^t\in[x_2,x_3]$ (with $x_2^t\to x_2$ as $t\to0$, $x_2^t\neq x_2$ for $t>0$). 
Apply \Cref{lem: anti lip alexlem} to $x_1\ll x_2^t\ll x_3\ll x_4$, obtaining configurations $\Delta(\bx_1,\bx_2^t,\bx_4)$ including $\bar x_3^t,\bar x_3'^t,\bar x_3''$ and corresponding long maps $\varphi^t$. 
Let us assume $[\bx_1,\bx_4]$ to be fixed independent of $t$, then also $\bar x_3''$ is independent of $t$. 

We consider the limits $\bx_2=\lim_{t\to0}\bx_2^t$, which will be on a null geodesic emanating from $\bx_1$, as well as the limits $\bar x_3=\lim_{t\to0}\bar x_3^t$ and $\bar x_3'=\lim_{t\to0}\bar x_3'^t$.

Then $\Delta(\bx_1,\bx_2,\bx_4)$ has the side-lengths prescribed in the lemma. 
Also, as all sequence members had that property, we have that $T_2,T_4$ are contained in $\Delta(\bx_1,\bx_2,\bx_4)$ and do not overlap. 
Observe that all hyperbolic sectors in the limit configuration exist:
as $0=\tau(x_1,x_2)<\tau(x_1,x_3)$ and $\bx_3',\bx_3''$ were below $[\bx_2,\bx_4]$, we have that $\ma_{\bx_1}(\bx_3',\bx_3'')<\infty$. 
Since $0<\tau(x_2,x_3)$, we have that $\ma_{\bx_2}(\bx_3,\bx_3')<\infty$. 
Thus, all hyperbolic sectors $H_1,H_2,H_4$ are well-defined and have finite angles. Thus, we can construct the map $\varphi$ as in \Cref{lem: anti lip alexlem} and observe that the individual arguments for the longness of $\varphi$ do not rely on the timelikeness of $[x_1,x_2]$.

(ii) If $x_2 \leq x_3$ are null related, the sector $H_2$ ceases to exist, but otherwise the construction can be carried out in the same way. The only argument which needs slight modification is if $p \in T_2, r \in H_4, p \leq r$, which can be further reduced to $p \in [\bx_2,\bx_3']$. Indeed, as in \Cref{lem: anti lip alexlem}, we consider the point $q$ where the segment $[p,r]$ leaves $T_2$. If this happens via $[\bx_1,\bx_3']$, then we can proceed as in the case of $p \in H_1, r \in H_4$, which remains unchanged to the timelike version. If this happens via $[\bx_2,\bx_3']$, we can assume $p \in [\bx_2,\bx_3']$, and simply compute 
\begin{align*}
\tau(\varphi(p),\varphi(r)) & \geq \tau(\varphi(p),x_3) + \tau(x_3,\varphi(r)) = \tau(x_3,\varphi(r)) \\ 
& = \tau(x_3,x_4) - \tau(\varphi(r),x_4) = \tau(\bx_3,\bx_4) - \tau(r,\bx_4) \\
& \geq \tau(p,\bx_4) - \tau(r,\bx_4) \geq \tau(p,r) \, .
\end{align*}

(iii) Finally, note that if $x_3\leq x_4$ are null related, the assumption that the triangles are required to not overlap already forces the quadrilateral to be convex.
\end{rem}

The next lemma is obtained by iterative applications of the previous one.

\begin{lem}[Majorisation of polygonal paths in $\lm{K}$]
\label{lem: polygonal majorisation model space}
Let $x\ll y$ and let $\alpha,\beta$ be two future-directed piecewise timelike geodesics in $\lm{K}$ from $x$ to $y$ forming a timelike loop $C$.
Then there exists a timelike loop $\bar C=(\bar\alpha,\bar\beta)$ consisting of future-directed piecewise timelike geodesics which form a convex polygon and a strongly long map $f: R(\bar C) \to R(C)$ such that $C$ and $\bC$ are mapped in a $\tau$-length-preserving way.
\end{lem}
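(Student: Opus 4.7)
The plan is to iterate \Cref{lem: anti lip alexlem} to straighten the polygon one breakpoint at a time, using induction on the total number $N = n+m$ of pieces of $\alpha$ and $\beta$. In the base case, when $R(C)$ is already convex --- for instance when both curves are single timelike geodesics, which in $\lm{K}$ under our standing size bounds must actually coincide by geodesic uniqueness --- there is nothing to do and we take $\bar C = C$, $f = \mathrm{id}$.

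For the inductive step, assuming $R(C)$ is non-convex, I locate an interior breakpoint at which \Cref{lem: anti lip alexlem} can be applied. Without loss of generality this is a breakpoint $v = q_j$ of $\beta$ (the $\alpha$-case is symmetric, and shared endpoint issues reduce to these by sliding slightly into a curve). I set $x_3 = v$, $x_2 = q_{j-1}$, $x_4 = q_{j+1}$, the two $\beta$-neighbours of $v$, and choose $x_1$ to be a suitable earlier vertex of the polygon: the natural candidate is $x_1 = x$, but if $x$ fails to give a valid quadrilateral I take $x_1$ to be the latest breakpoint of $\alpha$ satisfying $x_1 \ll q_{j-1}$ and the non-overlap condition. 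By choice of $v$ as a concave breakpoint, the angle inequality $\ma_{x_3}(x_1,x_2) \geq \ma_{x_3}(x_1,x_4)$ holds, so \Cref{lem: anti lip alexlem} applies (or \Cref{rem: null anti lip alexlem} if some side of the quadrilateral is null, e.g.\ when $x_1 = x$ is null-related to $x_2$).

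This gives a triangle $\Delta(\bar x_1,\bar x_2,\bar x_4)$ in $\lm{K}$ and a strongly long map $\varphi$ onto the concave quadrilateral, sending the single side $[\bar x_2,\bar x_4]$ onto the broken path $[x_2,x_3]\cup[x_3,x_4]$ in a $\tau$-length preserving way. I then assemble a new loop $\tilde C$ in $\lm{K}$ by replacing the two segments $[q_{j-1},q_j]\cup[q_j,q_{j+1}]$ of $\beta$ with the single segment $[\bar q_{j-1}, \bar q_{j+1}]$, and reattaching the unmodified parts of $\alpha$ and $\beta$ via isometric placement along $[\bar x_1,\bar x_2]$ and $[\bar x_1,\bar x_4]$. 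Since $\tilde C$ has one fewer breakpoint than $C$, the inductive hypothesis provides a convex loop $\bar C$ and a strongly long map $g\colon R(\bar C)\to R(\tilde C)$. The composition of $g$ with $\varphi$, extended by the identity on the untouched part of the polygon, yields the required strongly long map $f\colon R(\bar C) \to R(C)$, and the $\tau$-length preservation along the boundary curves is inherited from each step.

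The principal technical obstacle is the bookkeeping around the inductive step: detecting at least one concave vertex whenever $R(C)$ fails to be convex, making a valid choice of $x_1$ so that the non-overlap hypothesis of \Cref{lem: anti lip alexlem} is satisfied, and confirming that the isometric gluing of the unchanged portion of the polygon to the new triangle in $\lm{K}$ is coherent (no self-intersections and consistent causal structure). Once these geometric set-ups are handled, termination in finitely many steps and preservation of the strongly long property through composition follow immediately.
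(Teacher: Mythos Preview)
Your plan—iterate \Cref{lem: anti lip alexlem} and induct on the number of pieces—is the paper's strategy, but you run the two operations in the opposite order, and this creates a real gap. The paper first removes one fan triangle $T_1$ (based at $x$), applies the induction hypothesis to the remaining loop $C'$ to get a \emph{convex} polygon $\bar C'$, and only then reattaches an isometric copy $\bar T_1$ along the matching side of $\bar C'$. Convexity of $\bar C'$ is what lets one verify that the glued map is strongly long across the seam (the intrinsic geodesic from a point in $\bar T_1$ to one in $R(\bar C')$ meets the shared side exactly once, and the argument splits there), and it confines any residual non-convexity to the single junction between $\bar T_1$ and its neighbour, where one further two-triangle straightening drops the count and the induction closes. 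You instead straighten first: after \Cref{lem: anti lip alexlem} produces $\Delta(\bar x_1,\bar x_2,\bar x_4)$ you ``reattach the unmodified parts along $[\bar x_1,\bar x_2]$ and $[\bar x_1,\bar x_4]$'', but the angles at $\bar x_1,\bar x_2,\bar x_4$ have changed (this is the Alexandrov content of \Cref{lem: anti lip alexlem}), so both reattached pieces are rotated relative to their original position. You have not shown that the developed object is a bona fide timelike loop in $\lm{K}$ (no self-crossings of $\tilde\alpha,\tilde\beta$, $R(\tilde C)$ well-defined), nor that the glued map is strongly long across the two seams—neither side of a seam is yet convex, so the single-crossing argument is unavailable.

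Two smaller issues: your choice of $x_1$ is underspecified (the paper always takes $x_1=x$, the fan base, which automatically gives the non-overlap condition); and you are missing a base case the paper treats separately by a Lorentz reflection across $[x,y]$, namely when $\alpha$ and $\beta$ each have exactly one breakpoint and the loop is non-convex—here no admissible chain $x_1\ll x_2\ll x_3\ll x_4$ can be chosen from the four vertices, so \Cref{lem: anti lip alexlem} does not apply directly.
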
 

\begin{proof}
First note that $C$ is already a polygon, it might just be not convex. If $C$ forms a convex polygon, then $\bar C = C, f= \id$ and we are done. 
Otherwise, we proceed by induction on the total number $n$ of 
break points of $C$. 
If $n=0$, the loop is degenerate (and in particular, convex). 
If $n=1$, then $C$ is a timelike triangle, which is clearly convex. 
If $n=2$, then we distinguish whether both breakpoints lie on one curve or not. 
If, say, $\alpha$ has two breakpoints and $C$ is not convex, we are in the situation of \Cref{lem: anti lip alexlem} and can proceed using that construction.
If both $\alpha$ and $\beta$ have a single breakpoint, call them $p_{\alpha}$ and $p_{\beta}$, non-convexity means that $p_{\alpha}$ and $p_{\beta}$ lie on the same side of $[x,y]$. 
Then the loop given by $\bar \alpha=\alpha$ and $\bar\beta$ being the (Lorentz-) reflection of $\beta$ through $[x,y]$ bounds a convex region. 
The map $f$ that restricts to the identity on $\Delta(x,p_{\alpha},y)$ and to the reflection $\Delta(x,\bar p_{\beta}, y) \mapsto  \Delta(x, p_{\beta}, y)$ is easily seen to be strongly long by \Cref{lem: Maximising distances in model space}(i). 

Let $n \geq 3$, in which case $C$ consists of $n$ triangles (by connecting each breakpoint to $x$). 
Without loss of generality, assume that $\alpha$ has at least one breakpoint. 
By removing the triangle $T_1$ that contains the lowest (with respect to ordering of the parameter values) breakpoint on $\alpha$, which we denote by $p_1$, one ends up with a timelike loop $C'$ consisting of $n-1$ triangles. 
In particular, the loop is made up of $n+2$ distance realisers. 
Without loss of generality we assume its vertices are breakpoints of $\alpha$. 

For $C'$ we apply the induction hypothesis to obtain a convex polygon $\bar C'$ and a strongly long map $f_n$ between the corresponding regions. 
The first step is to show that the piecewise distance realisers that $\bar C'$ is made up of actually correspond to those of $C'$. 
If that is the case, $\bar C'$ consists of $n-1$ triangles (or less, some of them might be straightened out, so to say, in the spirit of \Cref{lem: anti lip alexlem}). 
To this end, assume $p'$ and $q'$ are points on a segment of, say, $\alpha'$ (which is the concatenation of $[x,p_1]$ and the obvious restriction of $\alpha$ corresponding to $C'$). Setting $p'=\alpha'(s), q' = \alpha'(t), \bp' = \bar \alpha'(s), \bq'=\bar \alpha'(t)$, we simply compute 
\begin{equation*}
L_{\tau}(\alpha'|_{[s,t]})=\tau(p',q') \geq \tau(\bp',\bq')\geq L_{\tau}(\bar \alpha'|_{[s,t]}) = L_{\tau}(\alpha'|_{[s,t]}) \, ,
\end{equation*}
showing that $\bar \alpha'$ consist of segments corresponding to those of $\alpha'$. 
In particular, $\bar\alpha'$ starts with a distance realiser of the same length as the long side of $T_1$. 
Thus, one can isometrically attach the remaining triangle, which together with $C'$ makes up $C$, also to $\bar C'$. 
Denote this (filled in) triangle by $\bT_1$ (and attach it on the `outside' of $R(\bC')$ using a Lorentz transformation). 

We now check that $f_n: R(\bar C') \cup \bT_1 \to R(C') \cup T_1$ is strongly long, where $f_n|_{\bT_1}$ is the (unique) isometry transforming $\bT_1$ into $T_1$ accordingly. 
In particular, note that $R(C') \cup T_1=R(C)$. 
The only case of interest (up to symmetry) is $p \in \bT_1, r \in R(\bar C')$ and $p \leq r$. 
On the geodesic $[p,r]$ in $R(\bar C') \cup \bT_1$ connecting $p$ and $r$, denote by $q$ the point\footnote{Note that even though $R(\bar C') \cup \bT_1$ might not be convex, such a point $q$ always exists as we consider the intrinsic time separation function on $R(\bar C') \cup \bT_1$.} where this geodesic meets the triangle side shared by $R(\bar C')$ and $\bT_1$. 
Then 
\begin{align*}
\tau(p,r) & = \tau(p,q) + \tau(q,r) \leq \tau(f_n(p),f_n(q)) +\tau(f_n(q),f_n(r)) \\
& \leq \tau(f_n(p),f_n(r)) \, ,    
\end{align*}
where we have $\tau(p,q)=\tau(f_n(p),f_n(q))$ since $f_n$ restricts to an isometry on $T_1$ and $\tau(q,r) \leq \tau(f_n(q),f_n(r))$ by induction hypothesis. 
If $p$ and $r$ are merely null related the claim follows similarly since $f_n$ was strongly long on $R(\tC)$. 

If $R(\bar C') \cup \bT_1$ is convex, the proof is finished. 
We are left to proceed where this is not the case.
If $R(\bC') \cup \bT_1$ is itself not convex, this means that $\bT_1$ together with the adjacent triangle from $R(\bC')$, call it $\bT_2$, is not convex, since $R(\bC')$ is supposed to be convex by induction hypothesis and by construction the causality forces the configuration to be convex at $x$. 
These two triangles considered isolated form a configuration with two breakpoints, i.e., either as in \Cref{lem: anti lip alexlem} or like the configuration outlined at the start of this proof. In either case, we obtain a triangle $\bT'$ and an associated strongly long map $f':\bT' \to \bT_2 \cup \bT_1$. 
Consider now a new loop $\bC''$ arising from $\bC'$ by deleting $\bT_2$ and instead adding $\bT'$. 
By combining $f_n$ and $f'$ appropriately we get a long map $R(\bC'') \to R(\bC') \cup \bT_1$. 
The composition of (strongly) long maps is again (strongly) long. 
Now it could be that we need to replay the same game, i.e., $\bC''$ might not be convex. 
This time, however, $\bC''$ consists of at most $n-1$ triangles and hence we can apply the induction hypothesis directly. 
Indeed, in the previous case of $R(\bC') \cup \bT_1$ we could not rule out that we are dealing with $n$ triangles in which case we could not apply the induction hypothesis right away.
\end{proof}

The following statement deals with preparation concerning the theorem outside the model space. 

\begin{lem}[Geodesic surface spanned by a curve is compact]
\label{lem: Geodesic surface spanned by a curve is compact}
Let $X$ be a strongly causal \LpLS with curvature bounded from above by $K$. 
Let $\alpha$ be a timelike curve from $O$ to $z$ in a ($\leq K$)-comparison neighbourhood $U$. 
Then the set of points $F= \alpha\cup \bigcup_{s<t}[\alpha(s),\alpha(t)] $ is compact.
\end{lem}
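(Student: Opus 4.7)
The plan is to exhibit $F$ as the continuous image of a compact parameter space. Writing $\alpha\colon[a,b]\to X$ and $\Delta=\{(s,t)\in[a,b]^2\mid s\le t\}$, I would define
\begin{equation*}
\Phi\colon \Delta\times[0,1]\to U,\qquad
\Phi(s,t,r)=\begin{cases} [\alpha(s),\alpha(t)](r) & s<t,\\ \alpha(s) & s=t.\end{cases}
\end{equation*}
The domain is compact, and by construction $F=\Phi(\Delta\times[0,1])$, so it suffices to show that $\Phi$ is continuous.

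For the continuity, fix a sequence $(s_n,t_n,r_n)\to(s,t,r)$ in $\Delta\times[0,1]$. By continuity of $\alpha$ we have $\alpha(s_n)\to\alpha(s)$ and $\alpha(t_n)\to\alpha(t)$, and since $U$ is a comparison neighbourhood, $\tau$ is continuous on the relevant set, so $\tau(\alpha(s_n),\alpha(t_n))\to\tau(\alpha(s),\alpha(t))$. Consider the constant-speed distance realisers $\gamma_n=[\alpha(s_n),\alpha(t_n)]\colon[0,1]\to U$, whose $\tau$-lengths are $\tau(\alpha(s_n),\alpha(t_n))$. Because $X$ is strongly causal, the limit curve theorem (cf.\ \cite{KS18}) yields a subsequence of $(\gamma_n)$ converging uniformly to a future-directed causal curve $\gamma$ from $\alpha(s)$ to $\alpha(t)$, with $L_\tau(\gamma)\ge\limsup_n L_\tau(\gamma_n)=\tau(\alpha(s),\alpha(t))$.

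When $s<t$, the reverse triangle inequality forces $\gamma$ to be a distance realiser, and the uniqueness of geodesics in a $(\le K)$-comparison neighbourhood \cite[Theorem 4.7]{BNR25} identifies $\gamma$ with $[\alpha(s),\alpha(t)]$. Since the limit does not depend on the chosen subsequence, the full sequence $(\gamma_n)$ converges uniformly to this unique limit, and evaluating at the parameters $r_n\to r$ gives $\Phi(s_n,t_n,r_n)=\gamma_n(r_n)\to\gamma(r)=\Phi(s,t,r)$. When $s=t$, the bound $L_\tau(\gamma_n)\to 0$ plus strong causality (which prevents the images of the $\gamma_n$ from spreading out) pins every subsequential limit of $\gamma_n(r_n)$ to the single point $\alpha(s)=\Phi(s,s,r)$, giving continuity there as well.

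The main obstacle is the continuity of $\Phi$ along the diagonal $\{s=t\}$, where the geodesic collapses to a point, together with the subtlety that the limit curve theorem only delivers a subsequential limit a priori. Both issues are overcome by combining the uniqueness of distance realisers inside a comparison neighbourhood with strong causality, which together upgrade the subsequential convergence to full convergence and rule out any escape of the endpoint $\gamma_n(r_n)$ in the degenerate case.
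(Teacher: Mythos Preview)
Your overall strategy---exhibit $F$ as the image of the compact set $\Delta\times[0,1]$ under a continuous map---is sound and essentially equivalent to the paper's sequential-compactness argument. The gap is in how you establish continuity of $\Phi$: you invoke the limit curve theorem in $X$, but that theorem (in the sense of \cite{KS18}) requires more than strong causality. It needs, among other things, local causal closedness, $d$-compatibility, and implicitly some properness or local compactness of the metric $d$ so that the curves $\gamma_n$ actually accumulate. None of these are assumed here, and the paper is careful to use the limit curve theorem only in the model space $\lm{K}$, never in $X$. Without it, you have no a priori reason why a subsequence of $(\gamma_n)$ converges at all, and the rest of the argument (uniqueness of the limit, hence full-sequence convergence) collapses.

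The paper replaces this step by using the upper curvature bound directly. Given $x_n=[\alpha(s_n),\alpha(t_n)](r_n)$ with $(s_n,t_n,r_n)\to(s,t,r)$, it forms comparison triangles $\Delta(\bar\alpha(s),\bar\alpha(s_n),\bar\alpha(t))$ and $\Delta(\bar\alpha(s_n),\bar\alpha(t_n),\bar\alpha(t))$ in $\lm{K}$, uses the Law of Cosines to show the relevant angles at the endpoints tend to zero, and hence that $\bar x_n\to\bar x$ in the model space. The curvature bound then transfers the timelike relations $[\bar\alpha(s),\bar\alpha(t)](r-2\varepsilon)\ll\bar x_n\ll[\bar\alpha(s),\bar\alpha(t)](r+2\varepsilon)$ back to $X$, trapping $x_n$ in arbitrarily small causal diamonds centred at $x$; strong causality makes these a neighbourhood basis, yielding $x_n\to x$. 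This bypasses any need for compactness of $X$ and is the missing ingredient in your proposal. Your treatment of the diagonal case $s=t$ is also too brief: the paper handles the boundary cases ($r=t=0$, etc.) separately, again using strong causality to manufacture governing diamond neighbourhoods, rather than appealing to a vague ``spreading out'' principle.
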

\begin{proof}
First observe that $F \subseteq U$. 
Indeed, geodesics in $U$ are unique and $\alpha$ is in $U$ by assumption, hence $[\alpha(s),\alpha(t)](r) \in U$ for all $s,t,r$. 
Let $(x_n)_{n \in \N}$ be a sequence of points in $F$, i.e., $x_n=[\alpha(r_n), \alpha(s_n)](t_n)$ for some $r_n, s_n, t_n \in [0,1]$. 
From these sequences of parameter values we can extract converging subsequences which we will not relabel. 
That is, we can assume $r_n\to r,s_n\to s$ and $t_n\to t$. 
We claim that $x_n\to x:=[\alpha(r),\alpha(s)](t)$.  
Set $a=\alpha(r), a_n=\alpha(r_n), b=\alpha(s)$ and $b_n=\alpha(s_n)$. 
In particular, $a_n \to a$ and $b_n \to b$. 

Suppose first that $0<t<1$ and $r<s$. 
Clearly, there have to exist subsequences of $r_n$ and $s_n$ with `constant relation' to the limit point. 
That is, we can assume that $r\leq r_n$ and $s_n\leq s$, the other cases are analogous. 
Consider comparison triangles $\Delta(\bar a,\bar a_n,\bar b)$ for $\Delta(a,a_n,b)$ and $\Delta(\bar a_n,\bar b_n,\bar b)$ for $\Delta(a_n,b_n,b)$. 

Since $\tau$ is continuous, we have $\tau(a_n,b) \to \tau(a,b)>0, \tau(a_n,b_n) \to \tau(a,b)>0$ and $|\tau(a_n,b_n)-\tau(a_n,b)| \to 0$. 
An elementary calculation involving the Law of Cosines hence yields that $\ma_{\bb}(\ba,\ba_n) \to 0$ and $\ma_{\ba_n}(\bb_n,\bb) \to 0$. 
By this, we get that $\bar a_n\to\bar a$ and $\bar b_n\to\bar b$. 
Using a standard argument involving the bi-exponential map, we already get the  pointwise convergence of $\bx_n=[\ba_n,\bb_n](t_n)\to [\ba,\bb](t)$. 
As a consequence, we can find arbitrarily small diamonds centred around $\bx$ with endpoints on $[\ba,\bb]$ that contain all but finitely many $\bx_n$, and it is this formulation which will give the claim. 
Indeed, for all $\varepsilon > 0$ we find $N \in \N$ such that the following holds for all $n \geq N$: 
\begin{align*}
\bx_n & \in I([\ba_n,\bb](t_n-\varepsilon),[\ba_n,\bb](t_n+\varepsilon)) \subseteq J([\ba_n,\bb](t_n-\varepsilon),[\ba_n,\bb](t_n+\varepsilon)) \\ 
& \subseteq I([\ba,\bb](t-2\varepsilon),[\ba,\bb](t+2\varepsilon)) \, .
\end{align*} 
In particular, we have 
\begin{equation*}
[\ba,\bb](t-2\varepsilon) \ll [\ba_n,\bb](t_n-\varepsilon) \ll \bx_n \ll [\ba_n,\bb](t_n+\varepsilon) \ll [\ba,\bb](t + 2\varepsilon) \, .
\end{equation*}
By upper curvature bounds, cf.\ \Cref{rem: one-sided comparison}, we hence infer that $[a_n,b](t_n-\varepsilon) \ll x_n \ll [a_n,b](t_n+\varepsilon)$ and similarly that $[a,b](t-2\varepsilon) \ll y \ll [a,b](t+2\varepsilon)$ for all $y \in I([a_n,b](t_n-\varepsilon),[a_n,b](t_n+\varepsilon))$. 
In particular, $x_n \in I([a_n,b](t_n-\varepsilon),[a_n,b](t_n+\varepsilon))$ and since this is a neigbourhood basis (indexed by $\varepsilon$) of $x$ by the strong causality of $X$, we conclude $x_n \to x$. 

In the case of $t=0$ and $r \neq 0$ or $t=1$ and $s\neq 1$, the above argument needs to be slightly adapted, as one of the sequences of governing points does not exist anymore (it would `overshoot' the geodesic). 
Say $t=0$ and $r \neq 0$, then $[\ba,\bb](-2\varepsilon)$ is not a valid description of a point in our configurations. 
However, as $r \neq 0$, we can, depending on the relation between $r_n$ and $r$, choose one of these to be the bottom governing points of a neighbourhood basis of $\bx$. 
More precisely, if there exists a subsequence such that $r < r_n$, then $I(\bar \alpha(r),[\ba,\bb](t+2\varepsilon))$ is a neigbourhood basis (indexed by $\varepsilon$) of $\bx$. 
If there is a subsequence such that $r_n \ll r$, then $I(\bar \alpha(r_n),[\ba,\bb](t+2\varepsilon))$ is a neigbourhood basis (indexed by $n$ and $\varepsilon$) of $\bx$. 

Only in the cases $r=t=0$ or $s=t=1$ we need to find governing points outside of our configuration. 
Say $r=t=0$. 
Note that in this case we have $O=x \leq x_n$ for all $n$ and by above arguments even $x_n \ll [O,b](2\varepsilon)$ for suitable choices of $\varepsilon$ and $n$. 
By strong causality, for any neighbourhood $V$ of $O$ we find $O_-^i$ and $O_+^i, i=1,\ldots, n$ such that $\cap_i I(O_-^i,O_+^i) \subseteq V$. 
Then $\cap_i I(O_-^i,[O,b](2\varepsilon))$ is a neighbourhood basis (indexed by $\varepsilon$) of $O$ that contains $x_n$. 
This finishes the proof. 
\end{proof}
\begin{lem}[Long skeleton]
\label{lem: long skeleton}
Let $U$ be a $(\leq K)$-comparison neighbourhood in a \LpLS $X$. 
Let $O \in U$ and let $p_1, \ldots, p_n$ in $I^+(O)\cap U$ be such that subsequent points are timelike related (not necessarily in a monotonous way). 
For each triangle $T_i=\Delta(O,p_i,p_{i+1})$, consider its comparison triangle $\bT_i=\Delta(\bar O, \bp_i, \bp_{i+1})$ arranged in such a way that $\bT_i$ and $\bT_{i+1}$ share the side $[\bar O, \bp_{i+1}]$ and no two filled in triangles overlap. 
Denote by $\psi : \bigcup_i \bT_i \to \bigcup_i T_i$ the map that sends each comparison point in a triangle to its originally corresponding point. 
Equip $\bigcup_i \bT_i$ with the restriction of the intrinsic time separation arising from the surface built from the filled in triangles. 
In other words, the distance in $\bigcup_i \bT_i$ is measured only with respect to curves that do not leave the union of the filled in triangles. 
Then $\psi$ is long. 
If $U$ is a $(\leq K)$-comparison neighbourhood in the sense of strict triangle comparison, then $\psi$ is strongly long. 
If $U$ is a $(\leq K)$-comparison neighbourhood in the sense of (strict) causal triangle comparison, then all triangles $T_i$ are allowed to be admissible causal, i.e.\ they may contain a single null side. In this case, $\psi$ restricted to the timelike sides is (strongly) long. 
\end{lem}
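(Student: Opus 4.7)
The approach I would take is to reduce any future-directed causal curve in $\bigcup_k \bT_k$ to a polygonal chain of segments, one per triangle, and then apply triangle comparison fibrewise. For the base case in which $\bx$ and $\by$ both lie in a single $\bT_i$, the desired inequality $\tau(\bx,\by)\leq\tau(\psi(\bx),\psi(\by))$, together with the implication $\bx\leq\by\Rightarrow\psi(\bx)\leq\psi(\by)$ under strict comparison, is literally the content of \Cref{def: triangle comparison} resp.\ \Cref{def: strict triangle comparison} applied to $T_i$, so no extra work is needed here.

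For general $\bx\in\bT_i$, $\by\in\bT_j$ I would take a future-directed causal curve $\bar\gamma$ in $\bigcup_k\bT_k$ from $\bx$ to $\by$ and record its successive crossings of the shared timelike sides $[\bO,\bp_k]$. Since the filled-in triangles sit non-overlappingly as a fan around $\bO$, every transition of $\bar\gamma$ from one triangle into the next must happen through such a shared side; choosing these crossing points as breakpoints $\bq_0=\bx,\bq_1,\dots,\bq_m=\by$ of a partition yields subcurves $\bar\gamma|_{[t_{l-1},t_l]}$ each lying entirely inside one triangle $\bT_{k_l}$. The infimum-of-partition-sums definition of $L_\tau$ then gives $L_\tau(\bar\gamma)\leq\sum_{l=1}^m\tau(\bq_{l-1},\bq_l)$. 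Each pair $\bq_{l-1},\bq_l$ sits on the timelike sides $[\bO,\bp_{k_l}]$ and $[\bO,\bp_{k_l+1}]$ of $\bT_{k_l}$, so triangle comparison in $T_{k_l}$ delivers $\tau(\bq_{l-1},\bq_l)\leq\tau(q_{l-1},q_l)$ where $q_l:=\psi(\bq_l)$. If $\bar\gamma$ is timelike then each $\tau(\bq_{l-1},\bq_l)>0$, hence also $\tau(q_{l-1},q_l)>0$, giving $q_{l-1}\ll q_l$ in $X$; iterating the reverse triangle inequality along the chain $\psi(\bx)=q_0\ll q_1\ll\dots\ll q_m=\psi(\by)$ yields $\sum_l\tau(q_{l-1},q_l)\leq\tau(\psi(\bx),\psi(\by))$. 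Combining everything gives $L_\tau(\bar\gamma)\leq\tau(\psi(\bx),\psi(\by))$, and taking the supremum over $\bar\gamma$ establishes longness.

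For strong longness, under strict triangle comparison the causal chain $\bq_{l-1}\leq\bq_l$ (automatic along a causal $\bar\gamma$) transports directly to $q_{l-1}\leq q_l$ in $X$, so $\bx\leq\by$ intrinsically forces $\psi(\bx)\leq\psi(\by)$. The admissible causal case requires only a small adjustment: any null side of some $T_{k_l}$ must be the one opposite to $O$, since the sides through $\bO$ are timelike by $O\ll p_k$, and all breakpoints $\bq_l$ automatically lie on timelike sides, which is precisely where strict causal triangle comparison operates; restricting the endpoints $\bx,\by$ to timelike sides removes any trouble there. The main technical obstacle I anticipate is justifying that a generic causal curve admits only finitely many crossings of the shared sides, so that the decomposition into in-triangle pieces is a well-defined finite partition. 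I would circumvent this by first handling maximising causal curves, which consist of straight timelike segments in each $\bT_k$ and therefore produce only finitely many breakpoints, and then recovering the general inequality from the infimum-of-partition-sums formulation of $L_\tau$ by refining partitions and passing to a limit.
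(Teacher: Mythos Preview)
Your proposal is correct and follows essentially the same strategy as the paper: decompose the path from $\bx$ to $\by$ at its crossings of the shared sides $[\bO,\bp_k]$, apply triangle comparison in each $\bT_{k}$, and then use the reverse triangle inequality in $X$ along the resulting chain. The only difference is cosmetic: the paper works directly with the intrinsic geodesic $[\bx,\by]$ in $\bigcup_k\bT_k$, which is automatically a finite concatenation of straight segments (one per triangle) and therefore sidesteps the finite-crossings issue you flag; your suggested workaround of first treating maximising curves is exactly this step, so the detour through arbitrary causal curves and the supremum is unnecessary.
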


\begin{proof}
We write $\psi(\bx)=x$ in the typical notation of comparison points in comparison triangles. 
If $x,y \in T_i$, then $\tau(\bx,\by) \leq \tau(x,y)$ follows directly by curvature bounds. 
If $x \in T_i, y \in T_j$, suppose without loss of generality $i \leq j$ and $\bx \leq \by$. 
For $k=i+1, \ldots, j$, denote by $\bq_k$ the point\footnote{It is precisely for the existence of these points that we view $\bigcup_i \bT_i$ with this particular time separation. In particular, $[\bx,\by]$ might not be a distance realiser in the ambient $\lm{K}$.} on $[\bO,\bp_k]$ where the geodesic $[\bx,\by]$ enters from $\bT_{k-1}$ into $\bT_{k}$. 
Then 
\begin{align*}
\tau(\bx,\by) & =\tau(\bx,\bq_{i+1}) + \sum_{k=i+2}^{j-2}\tau(\bq_k,\bq_{k+1}) + \tau(\bq_j,\by) \\
& \leq \tau(x,q_{i+1}) + \sum_{k=i+2}^{j-2}\tau(q_k,q_{k+1}) + \tau(q_j,y) \leq \tau(x,y) \, , 
\end{align*}
where the first inequality holds since each of these $\tau$-distances is now taken in a single triangle and the second inequality is due to reverse triangle inequality. 

The transitivity of $\leq$ gives that the causal relation is similarly preserved. 

Finally, the argument also applies verbatim in the case where $T_i$ may have a single null side ($x$ and $y$ shall not be chosen to lie on a null side). 
\end{proof}

\begin{thm}[Reshetnyak Majorisation, Lorentzian version]
\label{thm: Reshetnyak Majorisation}
Let $X$ be a strongly causal \LpLS with curvature bounded above by $K\in\mb R$ and let $U$ be a ($\leq K$)-comparison neighbourhood. 
Let $O \ll z$ with $\tau(O,z)<D_K$ in $U$ and let $\alpha$ and $\beta$ be two future-directed timelike rectifiable curves from $O$ to $z$ in $U$ forming a timelike loop $C$. 
Then there exists a causal loop $\tC$ in $\lm{K}$ bounding a convex region and a long map $f: R(\tC) \to U$ such that $\tC$ is mapped $\tau$-length preservingly onto $C$.
\end{thm}

\begin{proof}
We first prove the theorem in the case of $\beta=[O,z]$. 
We begin with constructing a map from a convex $n$-polygon which is `almost long' and from which we will obtain the desired curve and the associated mapping in the limit. 
For now fix $n \in \N$. 
We will hide $n$ in the notation for now and only add it later for emphasis. 

\textbf{Building a composite map for fixed $n$: }Let $p_i=\alpha(\frac{i}{2^n})$ for $i=0, \ldots, 2^n$. 
Then $p_0=O$ and $p_{2^n}=z$. 
Let $T_i=\Delta(O,p_i,p_{i+1})$ for $i=1, \ldots, 2^n-1$ and let $\bT_i$ be its comparison triangle. 
Arrange the comparison triangles in such a way that $\bT_i$ and $\bT_{i+1}$ share the side $[\bO,\bp_{i+1}]$ and are on opposite sides of that segment, i.e., they do not overlap. 
This yields an $(2^n+1)$-sided polygon in $\lm{K}$ which we may view as a timelike loop and denote by $\bar C_n$. 
Recall that $R(\bar C_n)$ denotes the region bounded by this polygon. 
By $\Sigma(\bar C_n)$ we will denote the union of the segments $[p_i,p_{i+1}], i=0, \ldots, 2^n-1$ and $[O,p_i], i=2, \ldots, 2^n$. 
We will refer to this as the \emph{skeleton} of $\bar C_n$.
Further, denote by $\tilde C_n$ the convex polygon associated to $\bC_n$ that is obtained by \Cref{lem: polygonal majorisation model space} and the corresponding long map between the regions by $\varphi_n$. 

Observe that $\Sigma(\bar C_n)$ may also be viewed as a union of comparison triangles. 
From this point of view, by \Cref{lem: long skeleton}, we have that the assignment $\psi_n$ that sends each comparison point in $\Sigma_n(\bar C_n)$ (equipped with the restriction of $\tau$ coming from $R(\bar C_n)$) to its original point in $\bigcup_{i=1}^{n-1} T_i$ is (strictly) long. 

Now let us consider the map $S_n$ from $R(\bC_n)$ to its skeleton $\Sigma(\bC_n)$ defined in the following way: any point already belonging to $\Sigma(\bC_n)$ (viewed as a subset of $R(\bC_n)$) is mapped identically. 
Any point in the interior of a triangle is `projected' onto the longest side of that triangle according to its distance from $\bO$. 
That is, if $x$ is in the interior of $\bT_i$ and $\tau(\bO,x)=r$, then $S_n(x)$ is the unique point on $[\bO,\bp_{i+1}]$ with $\tau(\bO,S_n(x))=r$. 

\textbf{Estimating how far the map is from being long: }
Unfortunately, it is easily seen\footnote{For example, the $\tau$-distance between a point very close to a short side of a triangle and a timelike related point on that short side will result in a spacelike relation after applying $S_n$.} that $S_n$ is not long. 
However, we can bound the `error' the map makes, and we will later on show that this vanishes as $n \to \infty$. 
Assume $x \in \bT_i, y \in \bT_j$ and observe that 
\begin{align*}
|\ma_O(x,y)-\ma(S_n(x),S_n(y))| & = |\ma_O(x,S_n(x) - \ma_O(y,S_n(y)| \\ 
& \leq \max_{k \geq \min(i, j)} \ma_O(\bp_k,\bp_{k+1}) \eqqcolon A^n(i,j) \, . 
\end{align*}
In words, the difference of the two angles at $\bO$ between $x$ and $y$ and its image points is bounded by the maximal angle at $\bO$ of all triangles $\bT_k$ which have a higher index than $i$ or $j$.
The number of these triangles will change (and in general increase) as $n$ increases. 
Call $a = \tau(\bO,x)=\tau(\bO,S_n(x)), b = \tau(\bO,y)=\tau(\bO,S_n(y)), c=\tau(x,y), c_n=\tau(S_n(x),S_n(y)), \omega=\ma_O(x,y), \omega_n=\ma_O(S_n(x),S_n(y))$ and $B=\tau(O,z)$. 
By inserting into the Law of Cosines \Cref{prop: law of cosines monotonicity} for $\Delta(O,x,y)$ and $\Delta(O,S_n(x),S_n(y))$ and subtracting, in the case of $K=0$, two terms cancel and we get
\begin{align*}
|c^2-c_n^2| & = 2ab|\cosh(\omega)-\cosh(\omega_n)| = 
4ab\sinh\left(\frac{\omega+\omega_n}{2}\right)\sinh\left(\left|\frac{\omega-\omega_n}{2}\right|\right) \\
& \leq 4ab\cosh(\max \{\omega, \omega_n\})\sinh\left(\left|\frac{\omega-\omega_n}{2}\right|\right) \\
& = 2(a^2+b^2-\min \{c^2,c_n^2\})\sinh\left(\left|\frac{\omega-\omega_n}{2}\right|\right) \\
& \leq 4B^2\sinh\left(\left|\frac{\omega-\omega_n}{2}\right|\right)\leq 4B^2\left|\sinh\left(\frac{1}{2}A^n(i,j)\right)\right| \, .
\end{align*}
Finally, using $|c-c_n|=\frac{|c^2-c_n^2|}{(c+c_n)}\leq\frac{|c^2-c_n^2|}{c}$ we obtain 
\begin{equation}
\label{eq: definition epsilon K=0}
|c-c_n| \leq 
\frac{4B^2}{c}|\sinh(\frac{1}{2}A^n(i,j))| \, . 
\end{equation}
A slightly more tedious yet still elementary calculation yields estimates for $K \neq 0$. 
In the case of $K < 0$, without loss of generality $K=-1$, we have 
\begin{align*}
& |\cos(c)-\cos(c_n)| = \sin(a)\sin(b)|\cosh(\omega)-\cosh(\omega_n)| \\
& \leq 2\sin(a)\sin(b)\cosh(\max \{\omega, \omega_n\})\sinh\left(\left|\frac{\omega-\omega_n}{2}\right|\right) \\
& = 2\left(\min \{\cos(c),\cos(c_n)\} - \cos(a)\cos(b)\right)\sinh\left(\left|\frac{\omega-\omega_n}{2}\right|\right) \\
& \leq 4\sinh\left(\left|\frac{\omega-\omega_n}{2}\right|\right)=4\left|\sinh\left(\frac{1}{2}A^n(i,j)\right)\right| \, .
\end{align*}
Then using $|c-c_n| = 2\arcsin\left(\frac{|\cos(c)-\cos(c_n))|}{|2\sin\left(\frac{c+c_n}{2}\right)|}\right) \leq 2\arcsin\left(\frac{|\cos(c)-\cos(c_n)|}{|2\sin\left(\frac{c}{2}\right)|}\right)$, we obtain 
\begin{equation}
\label{eq: definition epsilon K<0}
|c-c_n| \leq 2\arcsin\left({\frac{2}{|\sin{(\frac{c}{2})}|}}|\sinh(\frac{1}{2}A^n(i,j)|\right) \, .
\end{equation}
In the case of $K > 0$, without loss of generality $K=1$, we have 
\begin{align*}
& |\cosh(c)-\cosh(c_n)| = \sinh(a)\sinh(b)|\cosh(\omega)-\cosh(\omega_n)| \\
& \leq 2\sinh(a)\sinh(b)\cosh(\max \{\omega, \omega_n\})\sinh\left(\left|\frac{\omega-\omega_n}{2}\right|\right) \\
& = 2\left(\cosh(a)\cosh(b) - \min \{\cosh(c),\cosh(c_n)\}\right)\sinh\left(\left|\frac{\omega-\omega_n}{2}\right|\right) \\
& \leq 4\cosh(B)^2\sinh\left(\left|\frac{\omega-\omega_n}{2}\right|\right)=4\cosh(B)^2\left|\sinh\left(\frac{1}{2}A(i,j)\right)\right| \, .
\end{align*}
Via the identity $|c-c_n|=\arsinh(\frac{|\cosh(c)-\cosh(c_n)|}{2\sinh{(\frac{c+c_n}{2})}}) \leq \arsinh(\frac{|\cosh(c)-\cosh(c_n)|}{2\sinh{(\frac{c}{2})}})$ we obtain
\begin{equation}
\label{eq: definition epsilon K>0}
|c-c_n| \leq \arsinh \left(\frac{2\cosh(B)^2}{\sinh{(\frac{c}{2})}}|\sinh(\frac{1}{2}A(i,j)|) \right) \, .
\end{equation}
We allow ourselves to simultaneously denote the right hand side in the equations \eqref{eq: definition epsilon K=0}, \eqref{eq: definition epsilon K<0} and \eqref{eq: definition epsilon K>0} by $\varepsilon_n(x,y)$. 
Note that $B$ and $c$ are constant and $\sinh, \arsinh$ and $\sin$ are continuous with $\sinh(0)=\arsinh(0)=\sin(0)=0$. 
From these calculations, we infer $\tau(x,y) \leq \tau(S_n(x),S_n(y)) + \varepsilon_n(x,y)$. 

Collecting everything we discussed so far, we will compose the maps $\varphi_n : R(\tC_n) \to R(\bC_n)$ from \Cref{lem: polygonal majorisation model space}, the projection onto the skeleton $S_n : R(\bC_n) \to \Sigma(\bC_n)$ and the `comparison map' $\psi_n:\Sigma(\bC_n) \to \bigcup_i T_i$ to obtain a map $f_n=\psi_n \circ S_n \circ \varphi_n : R(\tC_n) \to \bigcup_i T_i$ from the convex region into the skeleton of the fan associated to $C_n$ that satisfies 
\begin{equation}
\label{eq: almost longness of f}
\tau(x,y) \leq \tau(f_n(x),f_n(y)) + \varepsilon_n(\varphi_n(x),\varphi_n(y)) \, .
\end{equation}
See \Cref{fig: majorisation construction nth step} for a visualisation of the $n$-th step. 
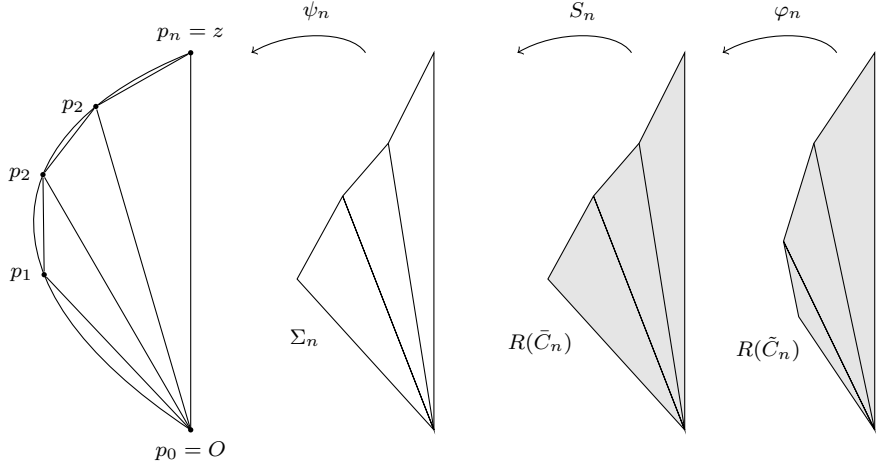
\begin{figure}
\begin{center}
\begin{tikzpicture}
\begin{scriptsize}
\draw  (0,0) -- (0,5);
\draw (0,0) .. controls (-3,2) and (-2.5,4) .. (0,5) node (p1)[circle, fill=black,inner sep=0.7pt,pos=0.35,label=left:$p_1$]{} node (p2)[circle, fill=black,inner sep=0.7pt,pos=0.6,label=left:$p_2$]{} node (p3)[circle, fill=black,inner sep=0.7pt,pos=0.8,label=left:$p_2$]{} ;;;;
\coordinate [circle, fill=black, inner sep=0.7pt, label=270: {$p_0=O$}] (O) at  (0,0);
\coordinate [circle, fill=black, inner sep=0.7pt, label=90: {$p_n=z$}] (z) at  (0,5);
\draw (0,0) -- (p1);
\draw (0,0) -- (p2);
\draw (0,0) -- (p3);
\draw (p1) -- (p2) -- (p3) -- (z);

\draw [xshift = 0.2cm] (3,0) -- (1.2,2) -- (1.8,3.1) -- (2.4,3.8) -- (3,5) -- (3,0);
\draw [xshift = 0.2cm](3,0) -- (1.8,3.1) -- (3,0) -- (2.4,3.8);
\coordinate [label=90: {$\Sigma_n$}] (S) at  (1.5,1);

\draw[xshift = 3.5cm, fill=gray, fill opacity = 0.2] (3,0) -- (1.2,2) -- (1.8,3.1) -- (2.4,3.8) -- (3,5) -- (3,0);
\draw[xshift = 3.5cm] (3,0) -- (1.8,3.1) -- (3,0) -- (2.4,3.8);
\coordinate [label=90: {$R(\bar C_n)$}] (S) at  (4.6,.9);

\draw [xshift = 6cm, fill=gray, fill opacity = 0.2] (3,0) -- (2,1.5) -- (1.8,2.5) -- (2.2,3.8) -- (3,5) -- (3,0);
\draw [xshift = 6cm] (3,0) -- (1.8,2.5) -- (3,0) -- (2.2,3.8);
\coordinate [label=90: {$R(\tilde C_n)$}] (S) at  (7.6,.8);

\draw[xshift=6cm,->] (2.5,5) .. controls (2.3,5.3) and (1.5,5.3) .. (1,5) node (p1)[pos=0.5,label=90:$\varphi_n$]{};
\draw[xshift=3.3cm,->] (2.5,5) .. controls (2.3,5.3) and (1.5,5.3) .. (1,5) node (p1)[pos=0.5,label=90:$S_n$]{};
\draw[xshift=-0.2cm,->] (2.5,5) .. controls (2.3,5.3) and (1.5,5.3) .. (1,5) node (p1)[pos=0.5,label=90:$\psi_n$]{};

\end{scriptsize}
\end{tikzpicture}
\end{center}
\caption{The $n$-th step of the iterative process outlined in the proof.}
\label{fig: majorisation construction nth step}
\end{figure}

\textbf{Letting $n$ vary: }
The next step is to send $n \to \infty$. 
Let us now label the vertices of $C_n$ as $p_i^n, i=0,\ldots, 2^n$ and the associated triangles as $T_i^n$ to signify that these are, respectively, the vertices and triangles belonging to the $n$-th step of this construction. 
By the limit curve theorem in $\lm{K}$, $\tC_n$ converges to a convex causal loop $\tC$. 
Note that we can keep $\bO$ and $\bz$ (and similarly $\tO$ and $\tz$) fixed for all $n$, say they are vertical. 
This prevents a blow up in (topological) size via, say, moving $\bz$ to infinity on a hyperbola centred at $\bO$. 
Analogously, $\bC_n$ converges to a (not necessarily convex) loop $\bC$. 
It is easily seen that (by an argument involving strong causality similar to that of \Cref{lem: Geodesic surface spanned by a curve is compact}) $C_n$ converges to the original loop $C$. 
Let us denote the vertices of $\tC_n$ by $\tp_i^n$. 
Note that some of these may not be breakpoints as the whole configuration was potentially straightened out, i.e., it might be that $\tp_i^n \in [\tp_{i-1}^n, \tp_{i+1}^n]$.
Nevertheless, we still have $\tau(p_i^n,p_{i+1}^n)=\tau(\tp_i^n,\tp_{i+1}^n)$, that is to say, the lengths of the individual segments in the $n$-th step agree. 
In particular, $L(C_n)=L(\tC_n)$. 
By nature of the convergence, we have that for every $x \in \tC$ there exists a sequence $x_n \in \tC_n$ such that $x_n \to x$. 
Similarly, for every $y \in C$ there exists a sequence $y_n \in C_n$ such that $y_n \to y$. 
Furthermore, 
points on $C_n$ and $\tC_n$ are in one-to-one correspondence to each other, with vertices $p_i^n$ corresponding to vertices $\tp_i^n$. 

\textbf{Define the limit map on the boundary: }
For $x \in \tC$, define $f(x) \coloneqq \lim_n f_n(x_n)$. 
We want to show that $f: \tC \to C$ is well-defined, i.e.\ both that it is independent of the sequence $x_n$ and that the limit exists. 
Since the sequence $(f_n(x_n))_n$ is contained in a compact set by \Cref{lem: Geodesic surface spanned by a curve is compact}, there is a converging subsequence (which we will not relabel). 
If $x_n \in [\tp_{k_n}^n, \tp_{k_n+1}^n]$ for some $(k_n)_n$, then $f_n(x_n) \in [p_{k_n}^n,p_{k_n+1}^n]$. 
Clearly, $\tau(p_{k_n}^n,p_{k_n+1}^n) \to 0$. 
Moreover, the $\tau$-length of the restriction of $\alpha$ between $p_{k_n}^n$ and $p_{k_n+1}^n$ goes to zero, which by the rectifiability of $\alpha$ implies that $(p_{k_n}^n)_n$ and $(p_{k_n+1}^n)_n$ converge to the same point on $\alpha$. 
This forces $f(x) \in C$. 
Now let $x_n'\in \tC_n$ be another sequence converging to $x$. 
Assume without loss of generality that $x_n \ll x_n'$. 
As $\tau$ is lower semi-continuous and $\tau(x,x)=0$, we infer $\tau(x_n,x_n') \to 0$. 
Moreover, if $x_n' \in [\tp_{k_n'}^n, \tp_{k'_n+1}^n]$, then $k_n\leq k'_n$ and $\tau(p_{k'_n}^n,p_{k'_n+1}^n) \to 0$. 
Thus, we compute
\begin{align*}
& \tau(\tp_{k_n}^n,x_n) + \tau(x_n,x_n') + \tau(x_n',\tp_{k_n+1'}^n)  \geq \tau(\tp_{k_n}^n,x_n) + \tau(x_n, \tp_{k+1}^n) + \ldots \\
& + \tau(\tp_{k'}^n,x_n') + \tau(x_n',\tp_{k_n+1'}) = \tau(\tp_{k_n}^n, \tp_{k+1}^n) + \ldots + \tau(\tp_{k'}^n,\tp_{k'_n+1}^n) \\
& = \tau(p_k^n,p_{k+1}^n) + \ldots + \tau(p_{k'}^n,p_{k'_n+1}^n) \geq L(\alpha|_{[s,s']}) \, ,
\end{align*}
Where $\alpha|_{[s,s']}$ denotes the restriction of $\alpha$ from $p_{k_n}^n$ to $p_{k'_n+1}^n$. 
By the rectifiability of $\alpha$ and the left most side in the above estimate going to zero, we conclude that $p_{k_n}^n$ and $p_{k'_n+1}^n$ converge to the same point. 
Since $f_n(x_n), f_n(x_n') \in J(p_{k_n}^n,p_{k'_n+1}^n)$, it follows that $\lim_n f_n(x_n)=\lim_n f_n(x_n')$. 
Finally, $f: \tC \to C$ is clearly long, as for points on the boundary of $R(\tC)$, the maps $\varphi_n$ and $S_n$ reduce to the identity and $\psi_n$ is long by \Cref{lem: long skeleton}.  
It is left to show that $L(C)=L(\tC)$.
On the one hand, $L(\tC) \leq L(C)$ follows by the longness of $f$: any partition of $\tC$ has less length than the corresponding partition of $C$. 
On the other hand, $p^n_1, \ldots p^n_{2^n}$ is a valid partition for the domain of $C$ with $\sum_i \tau(p^n_i, p^n_{i+1})=L(C_n)$, and hence by the definition of $L$ as an infimum we get $L(C)\leq \lim_n L(C_n)$. 
We also know by above considerations that $L(C_n)=L(\tC_n)$, and by upper semi-continuity of the length functional, cf.\ \cite[Proposition 3.17]{KS18}, we get $L(\tC) \geq \lim_n L(\tC_n)$. 
In total, this yields
\begin{equation}
L(\tC) \leq L(C)\leq \lim_nL(C_n)=\lim_nL(\tilde C_n) \leq L(\tC) \, .
\end{equation}

\textbf{Define limit map in interior: }
Let us extend this assignment to a long map $f:R(\tC) \to U$. 
Start out by noticing that for all $x \in R(\tC)$ there is a sequence $x_n \in R(\tC_n)$ such that $x_n \to x$. 
Let $D$ be a countable dense subset of the interior of $R(\tC)$. 
Let $x \in D$ and fix an approximating sequence $x_n \in R(\tC_n)$. 
Since $f_n(x_n) \in \alpha\cup \bigcup_{s < t}[\alpha(s), \alpha(t)]$ and this set is compact by \Cref{lem: Geodesic surface spanned by a curve is compact}, we infer the existence of a converging subsequence. Choose one such subsequence and define $f(x) = \lim_n f_n(x_n)$. 
By a classical diagonal argument, we can now assume that the whole sequence $(f_n(x_n))_n$ converges for all $x \in D$. 

We now show that $f$ is long on $D$.
Let $x,y \in D$. 
If $x=\tO$, we know that $\tau(\tO,y_n)\leq \tau(\bO,\varphi_n(y_n))=\tau(\bO,S_n(\varphi_n(y_n))\leq\tau(O,\psi_n(S_n(\varphi_n(y_n))))$, thus we have $\tau(\tO,y)\leq\tau(f(\tO),f(y))$. 
Otherwise, $\tau(x,y)=\lim_n\tau(x_n,y_n) \leq \lim_n\tau(f_n(x_n),f_n(y_n)) + \varepsilon_n(\varphi(x_n),\varphi(y_n)) = \tau(f(x),f(y) + \lim_n \varepsilon_n(\varphi(x_n),\varphi(y_n))$. 
The next step is to prove that $\varepsilon_n(\varphi(x_n),\varphi(y_n)) \to 0$. 
The idea is to show that $A^n(i_n,j_n) \to 0$, that is, the maximum angle of the triangles above $S_n(\varphi_n(x_n))$ or $S_n(\varphi_n(y_n))$ vanishes. 
Before we show this, note that $\min_{k\geq \min (i_n, j_n)}\tau(\bO,\bp_k^n)$ is bounded below, as indeed we have $\tau(\tO, x) = \lim_n \tau(\tO,x_n) \leq \lim_n \tau(\bO,\varphi_n(x_n))$ since $\varphi_n$ is long for all $n$. 
If this bound was not in place, then a potential problem may occur if $x_n$ lies in the bottom most triangle in each step of the iteration, i.e., $x_n \in T_1^n$ for all $n$, in which case we could not even bound $A^n$. 
Visually, the angles should become smaller because the triangles get narrower, but it cannot be ruled out that $T_1^n$ gets increasingly more null. 
Fortunately, for large enough $n$ there is a (positive) lower bound for $\tau(\tO,x_n)$ and hence $\varphi_n(x_n)$ cannot be in such a `decreasing' sequence of triangles. 
Thus, we aim to show
\begin{equation}
\max_{k
}(\tau(\bO,\bp_{k+1}^n)-\tau(\bO,\bp_k^n)) \to 0
\end{equation}
as $n \to \infty$, after which $A^n(i_n,j_n) \to 0$ simply follows by continuity of $\tau$ and the uniform continuity of the Law of Cosines when the adjacent side parameters are bounded below. 
To this end consider the map $g:[0,1] \to \R$ given by $g(t)=\tau(O,\alpha(t))$. 
This map is uniformly continuous since $\tau$ is continuous and the domain is compact. 
Furthermore, $p_i^n \in \alpha$ and $\tau(O,p_i^n)=\tau(\bO,\bp_i^n)$ for all $i,n$. 
The claim then follows using the definition of uniform continuity of $g$ and the way $p_i^n$ is constructed: indeed, for all $\delta >0$ there exists $N$ such that $|\frac{1}{2^n}|<\delta$ for all $n \geq N$, which is exactly the difference in parameters of $p_i^n$ and $p_{i+1}^n$, hence $|g(\frac{i+1}{2^n}) - g(\frac{i}{2^n})| = |\tau(O,p_{i+1}^n) - \tau(O,p_i^n)| < \varepsilon$. 
This allows us to conclude $A^n(i_n,j_n) \to 0$ and hence $\varepsilon_n(\varphi_n(x), \varphi_n(y)) \to 0$. 
Note that again due to the longness of $\varphi_n$ we have a positive lower bound for the denominator in \eqref{eq: definition epsilon K=0}, \eqref{eq: definition epsilon K<0} and \eqref{eq: definition epsilon K>0}. 

For $x \in \Int(R(\tC)) \setminus D$, we then define $f(x)=\lim_m f(x_m)$, where $x_m \to x$ is any sequence in $D$ approximating $x$ (for $x\in D$, we can of course set $x_m \equiv x$ and use the same formula). 
Let $x,y \in \Int(R(\tC))$, then $\tau(x,y)=\lim_m \tau(x_m,y_m) \leq \lim_m \tau(f(x_m),f(y_m)) = \tau(f(x),f(y))$ since $f$ is long on $D$. 

For, say, $x \in \Int(R(\tC))$ and $y \in \tC$ an analogous calculation holds (although one technically needs to redo some argument, they follow verbatim). 
This shows that $f$ is long on $R(\tC)$. 
As to $f(R(\tC)) \subseteq U$, recall that we said before that $f_n(x_n) \in \alpha \bigcup_{s < t}[\alpha(s), \alpha(t)]$ and this set is compact. 
This shows that even $f(R(\tC)) \subseteq \alpha \bigcup_{s < t}[\alpha(s), \alpha(t)] \subseteq U$. 

\textbf{The case of general $\beta$: }Finally, observe that the actual statement follows easily by just doing the outlined construction once each for $\alpha$ and $\beta$. 
More precisely, do the above outline constructed respectively for the loops $(\alpha, [O,z])$ and $(\beta,[O,z])$. 
This yields configurations $R(\tC_{\alpha})$ and $R(\tC_{\beta})$. 
By applying a suitable isometry to one of them, arrange them in such a way that they share $[\tO,\tz]$ and are on opposite sides of this segment. 
Now just take as $f$ the union of the maps $f_\alpha$ and $f_\beta$, which will still be long. 
The only case of interest is for points $x \in R(\tC_{\alpha}), y \in R(\tC_{\beta})$. 
Then the segment $[x,y]$ crosses $[\tO,\tz]$ in some point $q\in R(\tC_{\alpha})\cap R(\tC_{\beta})$. 
This gives
\begin{equation*}
\tau(x,y)=\tau(x,q)+\tau(q,y)\leq\tau(f_\alpha(x),f_\alpha(q))+\tau(f_{\beta}(q),f_{\beta}(y))\leq \tau(f(x),f(y)) \, .
\end{equation*}
\end{proof}

\section{Four-point condition}
In this section we introduce a formulation of upper curvature bounds via four-point configurations that are genuinely suitable for a discrete setting. 
Four-point conditions in the Lorentzian setting were originally introduced in \cite{BKR24}. 
Therein, however, only the characterisation for curvature bounded below is satisfactory for discrete spaces. 
While the upper curvature formulation presented in that work technically gets by without requiring the time separation function to be intrinsic, it still assumes a property which is at least morally close to the existence of $\tau$-midpoints (see \cite{BR23+} for a discussion about how $\tau$-midpoints relate to an intrinsic time separation function). 
This somewhat parallels the world of positive signature, where the four-point condition is most commonly the initial definition given for spaces with lower curvature bounds (going back to at least \cite{BGP92}), while CAT($k$) spaces are usually defined via classical triangle comparison. 

We refer to \cite{BKR24} for a thorough introduction to four-point configurations in Lorentzian signature. 
Let us nevertheless allow to go on a small, slightly informal, tangent, which should at least somewhat justify the namesake of the configurations outlined below, and maybe even explain the thought process of how one arrived at this characterisation in the first place. 
The following arguments are just as valid in the metric case, and in fact similarly spirited to \cite{BN08} (see also \cite{Gromov1999MetricStructures, Sat09}). 

Consider four timelike related points in a \LpLSn, say $x_1 \ll x_2 \ll x_3 \ll x_4$. 
These four points yield six $\tau$-distances in total, one for each pair of points. 
To `compare' such a configuration to a corresponding model space configuration towards the aim of obtaining a description of curvature bounds, one transports five of those six distances into the model space and hopes for an inequality on the remaining one. 
Transporting five distances to the model space essentially means to construct two comparison triangles which have one side in common, and the remaining side will be between the two vertices which are not on the shared segment. 
Further, constructing two comparison triangles along a shared side presents a choice on its own, namely if the two triangles are realised in the same half space generated by the line extending the shared segment or in different half spaces, i.e.\ if they overlap or not. 
Thus, in total we end up with 12 possible four-point configurations, and the question now becomes which of those may yield an inequality relating to curvature bounds. 
A priori, one can exclude a variety of these by simply testing two elementary properties numerically which need to be in place for curvature bounds to make sense to be expressed in that way. 
On the one hand, realising an appropriate comparison four-point configuration and looking at the resulting side to be compared, we should obtain a monotonous function in the curvature $K$ (as the model space $\lm{K}$ has curvature bounded above by any $K'\leq K$ and bounded below by any $K'\geq K$). 
The two binary parameters of upper and lower curvature bounds and the direction of the inequality between the remaining side and its corresponding distance in the model space yields four possible combinations in total. 
The type of monotonicity (increasing or decreasing) rules out two of them. 
On the other hand, any curvature inequality we obtain in this way must obviously hold true for any $\lm{K}$ (viewed as a \LpLS with curvature bounded from above and below by $K$). 
In fact, this yields a non-trivial condition for a fixed $K$.
Note that both the same-side and different-side realisation of the two triangles have the same comparison four-point configuration with one being its own comparison situation. 
The inequality between the remaining sides has to match the originally prescribed curvature inequality. 

In total, this rules out eight of the twelve possible configurations.
Two of the four which are still admissible are exactly those corresponding to curvature bounds from below, namely those where the triangles are realised on different sides and the inequality is on $\tau(x_1,x_2)$ or $\tau(x_3,x_4)$, respectively. 
In \cite{BKR24} these were called past- and future-configurations, respectively. 
These names are actually nicely compatible with the picture we try to paint here, as it is the `past most' or `future most' side that is giving comparison. 
The remaining two, via the Majorisation Theorem, turn out to successfully characterise upper curvature bounds. 

\begin{dfn}[Upper curvature bounds by four-point condition]
\label{def: CBA four point}
Let $X$ be a \LpLSn. 
An open set $U$ is called a ($\leq K$)-comparison neighbourhood in the sense of the \emph{four-point condition} if:
\begin{enumerate}
\item $\tau$ is continuous on $(U\times U)\cap \tau^{-1}([0,D_K))$, and this set is open.
\item For any $x_1\ll x_4$ in $U$, $x_2,x_3\in I(x_1,x_4)$, construct comparison triangles $\Delta(\hx_1, \hx_2, \hx_4)$ and $\Delta(\hx_1, \hx_3, \hx_4)$ realised on opposite sides of the line through $[\hx_1,\hx_4]$. 
Then $\tau(x_2,x_3) \geq \tau(\hx_2,\hx_3)$. 
\item For any $x_1 \ll x_2 \ll x_3 \ll x_4$ in $U$, construct comparison triangles $\Delta(\hx_1, \hx_2, \hx_3)$ and $\Delta(\hx_2, \hx_3, \hx_4)$ realised on the same side of the line through $[\hx_2,\hx_3]$. 
Then $\tau(x_1,x_4) \leq \tau(\hx_1,\hx_4)$. 
\end{enumerate}
\end{dfn}

\begin{dfn}[Upper curvature bounds by strict four-point condition]
\label{def: CBA strict four point}
Let $X$ be a \LpLSn. 
An open set $U$ is called a ($\leq K$)-comparison neighbourhood in the sense of the \emph{strict four-point condition} if:
\begin{enumerate}
\item $\tau$ is continuous on $(U\times U)\cap \tau^{-1}([0,D_K))$, and this set is open.
\item For any $x_1\ll x_4$ in $U$, $x_2,x_3\in (I^+(x_1)\cap J^-(x_4))\cup(J^+(x_1)\cap I^-(x_4))$ \footnote{This condition precisely requires $\Delta(x_1,x_2,x_4)$ and $\Delta(x_1,x_3,x_4)$ to be admissible causal.},  construct comparison triangles $\Delta(\hx_1, \hx_2, \hx_4)$ and $\Delta(\hx_1, \hx_3, \hx_4)$ realised on opposite sides of the line through $[\hx_1,\hx_4]$. 
Then $\hx_2 \leq \hx_3\Rightarrow x_2 \leq x_3 $ and $\tau(x_2,x_3) \geq \tau(\hx_2,\hx_3)$. 
\item For any $x_1 \leq x_2 \ll x_3 \leq x_4$ in $U$, construct comparison triangles $\Delta(\hx_1, \hx_2, \hx_3)$ and $\Delta(\hx_2, \hx_3, \hx_4)$ realised on the same side of the line through $[\hx_2,\hx_3]$. 
Then $\tau(x_1,x_4) \leq \tau(\hx_1,\hx_4)$. 
\end{enumerate}
\end{dfn}

\begin{figure}
\begin{center}
\begin{tikzpicture}
\draw (0,0) -- (0,4) -- (-1,1.5) -- (0,0) -- (0.5,3) -- (0,4);
\draw[dashed] (-1,1.5) -- (0.5,3);

\draw (2,1) -- (2,2) -- (3.1,4) -- (2,1) -- (3,-0.5) -- (2,2);
\draw[dashed] (3.1,4) -- (3,-0.5);
\begin{scriptsize}
\coordinate [circle, fill=black, inner sep=0.7pt, label=270: {$\hx_1$}] (bx1) at (0,0);
\coordinate [circle, fill=black, inner sep=0.7pt, label=90: {$\hx_4$}] (bx1) at (0,4);
\coordinate [circle, fill=black, inner sep=0.7pt, label=180: {$\hx_2$}] (bx1) at (-1,1.5);
\coordinate [circle, fill=black, inner sep=0.7pt, label=0: {$\hx_3$}] (bx1) at (0.5,3);

\coordinate [circle, fill=black, inner sep=0.7pt, label=225: {$\hx_2$}] (bx1) at (2,1);
\coordinate [circle, fill=black, inner sep=0.7pt, label=90: {$\hx_4$}] (bx1) at (3.1,4);
\coordinate [circle, fill=black, inner sep=0.7pt, label=180: {$\hx_3$}] (bx1) at (2,2);
\coordinate [circle, fill=black, inner sep=0.7pt, label=0: {$\hx_1$}] (bx1) at (3,-0.5);
\end{scriptsize}

\end{tikzpicture}
\end{center}
\caption{The two different types of four-point comparison configurations in \Cref{def: CBA four point}(ii) and (iii).}
\label{fig: CBA four point}
\end{figure}
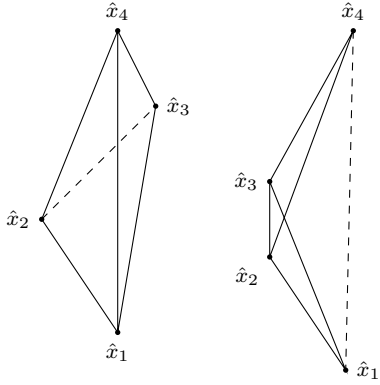

See \Cref{fig: CBA four point} for a depiction of the two different realisations of four-point configurations. 
Note that there is a small subtlety involving size bounds for (iii). 
Normally, like in \Cref{def: CBA four point}(ii) and in any other formulation from \cite{BKR24}, size bounds in the model space are guaranteed by size bounds in $X$. 
In the case of (ii) for instance, the fact that by assumption we always have $\tau(x_1,x_4) < D_K$ guarantees $\tau(\hx_1,\hx_4) < D_K$ (since they are equal) and hence the whole configuration is admissible, so to say.
However, in the case of (iii) this does not apply as it is the longest side that gives the inequality. 
More precisely, it might happen that $\tau(\hx_1,\hx_4) \geq D_K$. 
In this case, the four-point comparison configuration will satisfy $\tau(\hat x_1,\hat x_4)=+\infty$.
However, this is of little concern to us as we still have $\tau(x_1, x_4) < D_K$ by assumption and hence the inequality is trivially satisfied. 
This may be compared to how some authors assume configurations where an angle is undefined to automatically satisfy the curvature inequality, see e.g.\ \cite[Definitions 8.1 \& 9.1]{AKP24}. 

\begin{rem}[Null relations in strict four-point condition under regularity]
\label{rem: admissible causal triangles four point}
Note that if $X$ is regular, the case where $x_2\leq x_4$ are null related is not relevant in \Cref{def: CBA strict four point}(ii):
By reverse triangle inequality, we know that all $\tau(x_2,x_3),\tau(x_3,x_4),\tau(\hat x_2,\hat x_3),\tau(\hat x_3,\hat x_4)$ are zero, making the $\tau$-inequality trivial. If now $\hat x_2\leq\hat x_3$, as $\hat x_2,\hat x_3,\hat x_4$ do not lie on a distance realiser ($\hat x_2$ and $\hat x_3$ lie on different sides of the timelike segment $[\hx_1, \hx_4]$), the only possibility for this is if $\hat x_3=\hat x_4$, thus $\Delta(\hat x_1,\hat x_3,\hat x_4)$ degenerates. 
This means that $x_1,x_3,x_4$ are on a distance realiser with $[x_3,x_4]$ null. 
By regularity, this means that also $x_3=x_4$, making this implication trivial.

Analogously, the case where $x_1\leq x_3$ are null related is not relevant in \Cref{def: CBA strict four point}(ii). 
\end{rem}

We will demonstrate that \Cref{def: CBA four point} is part of the extensive family of curvature characterisations presented in \cite{BKR24} by showing it is implied by \Cref{def: triangle comparison} and implies \Cref{rem: one-sided comparison} (under the assumption of the existence of geodesics). 

\begin{prop}[Four-point condition is necessary]
\label{Four-point condition is necessary}
Let $X$ be a \LpLS and let $K \in \R$.
Let $U$ be a ($\leq K$)-comparison neighbourhood in the sense of (strict) timelike triangle comparison. 
Then $U$ is also a ($\leq K$)-comparison neighbourhood in the sense of the (strict) four-point condition.
\end{prop}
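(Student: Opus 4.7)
I plan to verify conditions (i), (ii), and (iii) of the (strict) four-point condition in turn. Condition (i) coincides with the continuity requirement of (strict) timelike triangle comparison and transfers immediately.

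For (ii), if $\hat x_2,\hat x_3$ are not causally related in $\lm{K}$ the claimed inequality is trivial and the $\leq$-preservation vacuous; otherwise, without loss of generality $\hat x_2\leq\hat x_3$. Since $\hat x_2,\hat x_3\in I(\hat x_1,\hat x_4)$ lie on opposite sides of $[\hat x_1,\hat x_4]$, the causal segment $[\hat x_2,\hat x_3]$ crosses $[\hat x_1,\hat x_4]$ at some $\hat p$ with $\hat x_2\leq\hat p\leq\hat x_3$ (a short verification in standard coordinates of $\lm{K}$). Let $p\in[x_1,x_4]$ be the corresponding comparison point, well-defined as both triangles share the side $[x_1,x_4]$. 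Applying one-sided (strict) triangle comparison (\Cref{rem: one-sided comparison}) in $\Delta(x_1,x_2,x_4)$ and in $\Delta(x_1,x_3,x_4)$ separately yields $\tau(x_2,p)\geq\tau(\hat x_2,\hat p)$ and $\tau(p,x_3)\geq\tau(\hat p,\hat x_3)$, as well as the preservation of $\leq$ in the strict case. Since $\hat x_2,\hat p,\hat x_3$ lie collinearly on $[\hat x_2,\hat x_3]$ their time separations add, so combined with the reverse triangle inequality in $X$ one obtains $\tau(x_2,x_3)\geq\tau(\hat x_2,\hat x_3)$; transitivity of $\leq$ yields $x_2\leq x_3$ in the strict case. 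Null edges are covered by the admissible causal version of strict triangle comparison, cf.\ \Cref{rem: admissible causal triangles four point}.

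For (iii), I would apply Reshetnyak Majorisation (\Cref{thm: Reshetnyak Majorisation}) to the timelike loop $C=(\alpha,\beta)$ with $\alpha=[x_1,x_2]\cup[x_2,x_3]\cup[x_3,x_4]$ and $\beta=[x_1,x_4]$; in the strict version with null sub-segments on $\alpha$, I would first perturb these to be timelike (e.g.\ replace $x_2$ by $[x_2,x_3](\varepsilon)$) and pass to a limit using continuity of $\tau$. Reshetnyak produces a convex loop $\tilde C$ with vertices $\tilde x_1,\tilde x_2,\tilde x_3,\tilde x_4$ whose segment lengths match those of $C$ (so in particular $\tau(\tilde x_1,\tilde x_4)=\tau(x_1,x_4)$) and a long map $f:R(\tilde C)\to U$ with $f(\tilde x_i)=x_i$. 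Longness of $f$ then gives $\tau(\tilde x_1,\tilde x_3)\leq\tau(x_1,x_3)=\tau(\hat x_1,\hat x_3)$ and $\tau(\tilde x_2,\tilde x_4)\leq\tau(x_2,x_4)=\tau(\hat x_2,\hat x_4)$, while convexity of $R(\tilde C)$ forces $\tilde x_1,\tilde x_4$ to lie on the same side of the line through $[\tilde x_2,\tilde x_3]$ — mirroring the hat configuration. Writing $\omega_1:=\ma_{\hat x_3}(\hat x_1,\hat x_2)$, $\omega_2:=\ma_{\hat x_3}(\hat x_2,\hat x_4)$, and $\tilde\omega_i$ for their tilde counterparts, Law of Cosines Monotonicity (\Cref{prop: law of cosines monotonicity}) gives $\tilde\omega_i\leq\omega_i$ (smaller opposite sides, equal adjacent sides). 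The 2D geometry of $\lm{K}$ makes same-side sub-angles at $\hat x_3$ additive, so $\ma_{\tilde x_3}(\tilde x_1,\tilde x_4)=\tilde\omega_1+\tilde\omega_2\leq\omega_1+\omega_2=\ma_{\hat x_3}(\hat x_1,\hat x_4)$. A final Law of Cosines comparison between $\Delta(\tilde x_1,\tilde x_3,\tilde x_4)$ and $\Delta(\hat x_1,\hat x_3,\hat x_4)$ — using $\tau(\tilde x_1,\tilde x_3)\leq\tau(\hat x_1,\hat x_3)$, equality on the other adjacent side, and the angle inequality — yields $\tau(x_1,x_4)=\tau(\tilde x_1,\tilde x_4)\leq\tau(\hat x_1,\hat x_4)$; the case $\tau(\hat x_1,\hat x_4)\geq D_K$ is automatic by the size-bound convention discussed after \Cref{def: CBA strict four point}. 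The chief obstacle is the additivity of sub-angles at $\hat x_3$ in the same-side Lorentzian configuration — the decisive 2D geometric input that converts two separate comparison triangles into a single comparison bound for $\tau(x_1,x_4)$.
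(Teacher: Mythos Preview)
Your argument is correct, and parts of it take a genuinely different route from the paper's.

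For (ii), your approach is more elementary: rather than invoking \Cref{lem: long skeleton} and a convexification step, you exploit the geometric observation that $[\hat x_2,\hat x_3]$ must cross the shared side $[\hat x_1,\hat x_4]$ at some $\hat p$, and then apply one-sided triangle comparison in $\Delta(x_1,x_2,x_4)$ and $\Delta(x_1,x_3,x_4)$ separately before combining via the reverse triangle inequality. This is shorter and avoids any appeal to long maps.

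For (iii), by contrast, you reach for the full Majorisation \Cref{thm: Reshetnyak Majorisation}, whereas the paper uses only its polygonal ingredients, namely \Cref{lem: long skeleton} together with \Cref{lem: anti lip alexlem}; since $\alpha$ is already a three-segment broken geodesic, the limiting process in Reshetnyak is unnecessary overhead here (though your deduction that the resulting $\tilde\alpha$ is again piecewise geodesic, via longness plus length preservation, is correct). Your final comparison step is organised as angle additivity at $\tilde x_3$ followed by a single Law of Cosines Monotonicity application; the paper instead moves $\hat x_1$ and $\hat x_4$ one at a time along hyperbolae centred at $\hat x_2$ and $\hat x_3$, using the corresponding additivity at those vertices. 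These are dual arrangements of the same monotonicity argument, and the additivity you rightly flag as the ``chief obstacle'' is equally present in the paper's version and equally justified by convexity and two-dimensionality. One minor slip: in your $\tilde\omega_1$ estimate the side that changes ($[\tilde x_1,\tilde x_3]$) is \emph{adjacent} to the angle at $\tilde x_3$, not opposite; the monotonicity direction is nonetheless correct because it is the longest side of that sub-triangle. Your perturbation treatment of null edges in the strict case is valid but terse; the paper handles these directly via \Cref{rem: null anti lip alexlem}.
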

\begin{proof}
\Cref{def: CBA four point}(i) is also present in the definition of triangle comparison. 

Let $x_2,x_3 \in I(x_1,x_4)$ in $U$. 
To show \Cref{def: CBA four point}(ii), an application of \Cref{lem: long skeleton} to the quadruple $(x_1,x_2,x_3,x_4)$ yields the existence of a configuration $\hx_2, \hx_3 \in I(\hx_1, \hx_4)$ such that $\tau(\hx_i,\hx_j)=\tau(x_i,x_j)$ for $(i,j)\in\{(i,j): 1\leq i<j\leq 4\}\setminus \{(2,3)\}$ and $\tau(\hx_2,\hx_3) \leq \tau(x_2,x_3)$. 
In the spirit of the first paragraph in the proof of \Cref{lem: polygonal majorisation model space}, we can assume that the configuration is convex. 
Then $\hx_2$ and $\hx_3$ are already on different sides of $[\hx_1,\hx_4]$. 
Thus, $(\hx_1,\hx_2,\hx_3,\hx_4)$ is a comparison four-point configuration as in (ii) and we already have the desired inequality. 

Concerning \Cref{def: CBA strict four point}(ii), if $U$ is a strict comparison neighbourhood and $\Delta(x_1,x_2,x_4)$ and $\Delta(x_1,x_3,x_4)$ are admissible causal then the map in \Cref{lem: long skeleton} is strongly long, the above arguments still hold and also imply $\hx_2 \leq \hx_3 \Rightarrow x_2 \leq x_3$, as desired. 

To prove \Cref{def: CBA four point}(iii), let $x_1 \ll x_2 \ll x_3 \ll x_4$ in $U$. 
Let $\alpha$ be the concatenation of the segments $[x_1,x_2], [x_2,x_3]$ and $[x_3,x_4]$ and let $\beta$ be the segment $[x_1,x_4]$. 
Then again by \Cref{lem: long skeleton}, we obtain a configuration $\bx_1 \ll \bx_2 \ll \bx_3 \ll \bx_4$ such that $\bar\tau(\bx_i,\bx_j)=\tau(x_i,x_j)$ for $(i,j)\in\{(i,j): 1\leq i<j\leq 4\}\setminus \{(2,4)\}$ and $\bar\tau(\bx_2,\bx_4) \leq \tau(x_2,x_4)$. Note that this only holds for the intrinsic time separation function $\bar\tau$ of $\Delta(\bx_1,\bx_2,\bx_3)\cup\Delta(\bx_1,\bx_3,\bx_4)$ (which need not be convex). 
If this in not convex, we apply \Cref{lem: anti lip alexlem} to the concave quadrilateral $\bx_1 \ll \bx_2 \ll \bx_3 \ll \bx_4$ to obtain a convex quadrilateral $\hx_1 \ll \hx_2 \ll \hx_3 \ll \hx_4$ such that $\tau(\hx_i,\hx_j)=\tau(x_i,x_j)$ for $(i,j)\in\{(i,j): 1\leq i<j\leq 4\}\setminus \{(1,3), (2,4)\}$ as well as $\tau(\hx_1,\hx_3) \leq \tau(x_1,x_3)$ and $\tau(\hx_2,\hx_4) \leq \tau(x_2,x_4)$.
If this was already convex, set $\hx_i=\bx_i$, and the same inequalities hold (additionally $\tau(x_1,x_3)=\tau(\hx_1,\hx_3)$ being an equality).

By the convexity in \Cref{lem: anti lip alexlem}, we observe that already $\hx_1$ and $\hx_4$ are on the same side of $[\hx_2,\hx_3]$. 
To turn this into a valid configuration for (iii), we need to increase $\tau(\hx_1,\hx_3)$ and $\tau(\hx_2,\hx_4)$ and hope that $\tau(\hx_1,\hx_4)$ increases along the way. 
Keep all vertices but $\hx_1$ fixed and move $\hx_1$ along the hyperbola centred at $\hx_2$ until $\tau(\hx_1,\hx_3)$ reaches the appropriate distance. 
Increasing $\tau(\hx_1,\hx_3)$ while keeping the other sides fixed forces all angles in $\Delta(\hx_1,\hx_2,\hx_3)$ to increase by Law of Cosines Monotonicity. 
Since $\ma_{\hx_2}(\hx_1,\hx_4)=\ma_{\hx_2}(\hx_1,\hx_3)+\ma_{\hx_2}(\hx_4,\hx_3)$ and only the vertex $\hx_1$ is moved, this forces the angle $\ma_{\hx_2}(\hx_1,\hx_4)$ to increase. 
Now Law of Cosines Monotonicity applied to $\Delta(\hx_1,\hx_2,\hx_4)$ yields that $\tau(\hx_1,\hx_4)$ increases as well. 
Moving $\hx_4$ in the same way along the hyperbola centred at $\hx_3$ to increase $\tau(\hx_2,\hx_4)$ causes another increase of $\tau(\hx_1,\hx_4)$ in complete analogy, showing \Cref{def: CBA four point}(iii). 

Finally, to show \Cref{def: CBA strict four point}(iii), if $x_1 \leq x_2$ are null related, after possibly applying \Cref{rem: null anti lip alexlem} instead of \Cref{lem: anti lip alexlem}, we have to adapt the above argument as follows: the process of moving along a hyperbola is replaced by moving along the lightcone. Increasing $\tau(\hx_1,\hx_3)$ while keeping $\tau(\hx_1,\hx_2)=0$ fixed will move $\hx_1$ into the past, which clearly increases $\tau(\hx_1,\hx_4)$ by the reverse triangle inequality. 
The same can be done if $x_3 \leq x_4$ are null related. 
\end{proof}

\begin{prop}[Four-point condition is sufficient]
\label{Four-point condition is sufficient}
Let $X$ be a \LpLS and let $K \in \R$. 
Let $U$ be a ($\leq K$)-comparison neighbourhood in the sense of the (strict) four-point condition. Additionally, assume that for each pair $x\ll y\in U$ with $\tau(x,y)<D_K$ there is a distance realiser contained in $U$ connecting them. 
Then $U$ is also a ($\leq K$)-comparison neighbourhood in the sense of (strict) one-sided triangle comparison.
\end{prop}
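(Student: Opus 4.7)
Conditions (i) and (ii) of (strict) one-sided triangle comparison carry over immediately: continuity of $\tau$ is assumed, and $D_K$-geodesicity is precisely the extra hypothesis of the proposition. Thus the work is to derive the comparison inequality for a (possibly admissible causal) triangle $\Delta(x,y,z)$ with a point $p$ on one side and the opposite vertex, from the four-point condition. I would treat the three natural cases (according to which side $p$ lies on) separately, each time applying a single four-point condition to the quadruple $\{x,y,z,p\}$ arranged to include three collinear points $u,p,v$ with $p\in[u,v]$. This collinearity forces one of the two comparison triangles in the four-point setup to degenerate into a straight segment, which is what connects the four-point condition to a standard comparison triangle.

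The quick case is $p\in[x,z]$ compared with the middle vertex $y$: applying \Cref{def: CBA four point}(ii) to $x_1=x$, $x_2=y$, $x_3=p$, $x_4=z$, the degeneration of $\Delta(\hat x,\hat p,\hat z)$ places $\hat p$ on $[\hat x,\hat z]$, so $\Delta(\hat x,\hat y,\hat z)$ is exactly the comparison triangle $\Delta(\bar x,\bar y,\bar z)$ with $\hat p=\bar p$. The conclusion $\tau(y,p)\geq\tau(\hat y,\hat p)=\tau(\bar y,\bar p)$ is then immediate; the strict causal implication comes directly from the corresponding part of \Cref{def: CBA strict four point}(ii).

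For the two remaining cases, $p\in[y,z]$ compared with $x$ (and symmetrically $p\in[x,y]$ compared with $z$), I would apply \Cref{def: CBA four point}(iii) to $x\ll y\ll p\ll z$. The degeneration of $\Delta(\hat y,\hat p,\hat z)$ now places $\hat p\in[\hat y,\hat z]$ with $|\hat x\hat p|=\tau(x,p)$, but the conclusion of (iii) is only $|\hat x\hat z|\geq\tau(x,z)=|\bar x\bar z|$, i.e., $\Delta(\hat x,\hat y,\hat z)$ is in general strictly larger than the comparison triangle for $\Delta(x,y,z)$. To bridge this to the sought inequality $\tau(x,p)\geq\tau(\bar x,\bar p)$, I would apply \Cref{prop: law of cosines monotonicity} twice: first, the inequality on the long side with equal adjacent sides gives $\ma_{\hat y}(\hat x,\hat z)\geq\ma_{\bar y}(\bar x,\bar z)$, and since $\hat p\in[\hat y,\hat z]$ and $\bar p\in[\bar y,\bar z]$ these angles agree with $\ma_{\hat y}(\hat x,\hat p)$ and $\ma_{\bar y}(\bar x,\bar p)$ respectively; secondly, applying monotonicity to the triangles $\Delta(\hat x,\hat y,\hat p)$ and $\Delta(\bar x,\bar y,\bar p)$ (which share two sides at $\hat y/\bar y$) transfers the angle inequality to $|\hat x\hat p|\geq|\bar x\bar p|$. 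The strict causal implication is automatic here by transitivity of $\leq$. I expect the main subtlety to be verifying that the admissibility hypotheses of the strict four-point condition are satisfied in each admissible causal configuration, which is a bookkeeping check that reduces to the observation that a triangle with $x\ll y\ll z$ never has $[x,z]$ null by the reverse triangle inequality.
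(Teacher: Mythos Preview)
Your approach is essentially identical to the paper's: the same case split on the side containing $p$, the same use of \Cref{def: CBA four point}(ii) for $p\in[x,z]$ (where one comparison triangle degenerates to yield exactly the comparison configuration), and the same use of \Cref{def: CBA four point}(iii) followed by two applications of \Cref{prop: law of cosines monotonicity} for $p\in[y,z]$.

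One small gap in your treatment of the strict case: the subtlety is not primarily admissibility of the four-point configuration. In the case $p\in[y,z]$ with $x\leq y$ null, your double application of \Cref{prop: law of cosines monotonicity} at $\hat y$ breaks down, since the hyperbolic angle $\ma_{\hat y}(\hat x,\hat z)$ is undefined when one adjacent side has length zero. The paper handles this by a direct argument: arrange $\Delta(\hat x,\hat y,\hat z)$ and $\Delta(\bar x,\bar y,\bar z)$ so that $[\hat y,\hat z]=[\bar y,\bar z]$ (hence $\hat p=\bar p$); then $\tau(\hat x,\hat z)\geq\tau(\bar x,\bar z)$ forces $\hat x$ to lie further into the past along the null ray from $\hat y=\bar y$ than $\bar x$, and the reverse triangle inequality gives $\tau(x,p)=\tau(\hat x,\hat p)\geq\tau(\bar x,\bar p)$ directly.
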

\begin{proof}
Note that the existence of a realising curve for the null side of an admissible causal triangle is not at all necessary, as we never choose points on a null side.
We will show that (strict) one-sided triangle comparison holds in $U$.
Condition (i) is present in all formulations of curvature bounds, while existence of geodesics is assumed directly. 

To show the desired inequality in (strict) one-sided triangle comparison, we distinguish several cases on where the points opposing a vertex lies. 
We give the proof in the non-strict case and point out the differences. 
Let $\Delta(x,y,z)$ be a timelike triangle in $U$ and let $p$ be a point on one of its sides. 
We distinguish if $p \in [x,z]$ or $p \in [y,z]$, the latter of which is symmetric to $p \in [x,y]$. 
Note that in the strict condition, the causal implication is automatic in the second case. 

Let us first assume that $p \in [y,z]$. 
Then $x \ll y \ll p \ll z$ form a timelike four-point configuration. 
We will construct a comparison configuration as in \Cref{def: CBA four point}(iii), that is we obtain comparison triangles $\Delta(\hx,\hy,\hp)$ and $\Delta(\hy,\hp,\hz)$ situated on the same side of $[\hy,\hp]$. 
The condition in \Cref{def: CBA four point}(iii) tells us that $\tau(x,z)\leq\tau(\hx,\hz)$. 
However, since $p \in [y,z]$, the second triangle is degenerate. 
In particular, $\Delta(\hx,\hy,\hz)$ is `almost' a comparison triangle for $\Delta(x,y,z)$ with $\hp$ being a comparison point for $p$, except that $[\hx,\hz]$ is too long. 
Comparing it with the actual comparison triangle $\Delta(\bx,\by,\bz)$, two applications of the Law of Cosines Monotonicity yield the desired inequality: 
\begin{align*}
\tau(\hat x,\hat z) & \geq \tau(x, z) = \tau(\bar x, \bar z) \, ,\\
\ma_{\hat y}(\hat x,\hat p) = \ma_{\hat y}(\hat x,\hat z) & \geq \ma_{\bar y}(\bar x,\bar z) = \ma_{\bar y}(\bar x,\bar p)\, ,\\
\tau(x,p)=\tau(\hat x,\hat p) & \geq \tau(\bar x,\bar p) \, .
\end{align*}
In the strict case, note that $p \in [y,z]$ is only considered if that side is timelike. 
If $x \leq y$ are null related, then the inequalities with the angles at $y$ technically are not valid since one adjacent side is null. 
However, once can make the following easy and visual argument: when comparing $\Delta(\hx,\hy,\hz)$ and $\Delta(\bx,\by,\bz)$, arrange them in such a way that $[\by,\bz]=[\hy,\hz]$. 
In particular, $\bp=\hp$. 
Since $\tau(\bx,\bz) = \tau(x,z) \leq \tau(\hx,\hz)$, by the reverse triangle inequality this means that $\bx$ is further in the past on the null segment emanating from $\by$ than $\hx$ is. 
This immediately implies $\tau(\bx,\bp) \leq \tau(\hx,\hp)= \tau(x,p)$, again by the reverse triangle inequality. 

Now suppose $p \in [x,z]$. 
In this case, as $p,y\in I(x,z)$ we can construct a four-point comparison configuration as in \Cref{def: CBA four point}(ii). This actually precisely yields a comparison triangle for $\Delta(x,y,z)$ together with a comparison point for $p$. 
The inequalities match as well, so there is nothing more to do. 
In the strict case, the case of $x\leq y$ being null related is not interesting (as $\hat x\ll \hat p$, giving $\hat p\not\leq\hat y$). 
If $y\leq z$ are null related, then \Cref{def: CBA strict four point}(ii) yields an admissible causal triangle with the correct inequality and the correct causal implication. 
\end{proof}

\begin{rem}[Majorisation as curvature characterisation]
It is worth pointing out that in \Cref{Four-point condition is necessary} we actually did not make explicit use of triangle comparison, but rather just worked with \Cref{thm: Reshetnyak Majorisation} directly (for which, in turn, of course, we used triangle comparison throughout the proof). 
This makes it possible to view the theorem as a characterisation of upper curvature bounds in itself. 
Indeed, we have the chain of implications \Cref{def: triangle comparison} $\Rightarrow$ \Cref{thm: Reshetnyak Majorisation}/\Cref{def: CBA majorisation} $\Rightarrow$ \Cref{def: CBA four point} $\Rightarrow$ \Cref{rem: one-sided comparison} $\Leftrightarrow$ \Cref{def: triangle comparison}. 
\end{rem} 

\begin{dfn}[Curvature bounds by majorisation]
\label{def: CBA majorisation}
Let $X$ be a \LpLSn. An open subset $U$ is called a $(\leq K)$-comparison neighbourhood in the sense of \emph{majorisation} if:
\begin{enumerate}
\item $\tau$ is continuous on $(U\times U) \cap \tau^{-1}([0,D_K))$, and this set is open. 
\item $U$ is $D_K$-geodesic, i.e.\ for all $x \ll y$ in $U$ with $\tau(x,y) < D_K$ there exists a geodesic between those points inside $U$.
\item Let $x \ll y$ in $U$ with $\tau(x,y) < D_K$ and let $\alpha$ and $\beta$ be two future-directed timelike rectifiable curves from $x$ to $y$ forming a timelike loop $C$ in $U$. 
Then there exists a causal loop $\tC$ in $\lm{K}$ bounding a convex region and a long map $f:R(\tC) \to U$ such that $\tC$ is mapped $\tau$-length preservingly onto $C$. 
\end{enumerate}
\end{dfn}

Finally, analogous to \cite[Lemma~4.8]{BKR24}, we also present an angle version of the four-point condition. 

\begin{lem}[Angle version of four-point condition]
Let $X$ be a \LpLS and $U$ satisfy \Cref{def: CBA four point}(i). 

For any $x_1\ll x_4$ in $U$, $x_2,x_3\in I(x_1,x_4)$, construct comparison triangles $\Delta(\hx_1, \hx_2, \hx_4)$ and $\Delta(\hx_1, \hx_3, \hx_4)$ realised on opposite sides of the line through $[\hx_1,\hx_4]$. 
Then $\tau(x_2,x_3) \geq \tau(\hx_2,\hx_3)$ (i.e.\ \Cref{def: CBA four point}(ii)) holds if and only if $\tilde{\ma}_{x_1}(x_2,x_3)\leq\tilde{\ma}_{x_1}(x_2,x_4)+\tilde{\ma}_{x_1}(x_4,x_3)$ (or equivalently exchanging $x_1$ and $x_4$).

For any $x_1 \ll x_2 \ll x_3 \ll x_4$ in $U$, construct comparison triangles $\Delta(\hx_1, \hx_2, \hx_3)$ and $\Delta(\hx_2, \hx_3, \hx_4)$ realised on the same side of the line through $[\hx_2,\hx_3]$. 
Then $\tau(x_1,x_4) \leq \tau(\hx_1,\hx_4)$ (i.e.\ \Cref{def: CBA four point}(iii)) holds if and only if $\tilde{\ma}_{x_2}(x_1,x_4)\leq\tilde{\ma}_{x_2}(x_1,x_3)+\tilde{\ma}_{x_2}(x_3,x_4)$.
\end{lem}

\begin{proof}
In both cases, the comparison angles on the right hand side are realised in the constructed comparison triangles. 
In the first case, they are realised on opposite sides of the line through the shared segment, while in the second case they are on the same side of the shared segment.
Thus, for the first and second case, respectively, we get
\begin{align*}
\tilde{\ma}_{x_1}(x_2,x_4)+\tilde{\ma}_{x_1}(x_4,x_3)&={\ma}_{\hx_1}(\hx_2,\hx_3)\\
\tilde{\ma}_{x_2}(x_1,x_3)+\tilde{\ma}_{x_2}(x_3,x_4)&={\ma}_{\hx_2}(\hx_1,\hx_4) \, .
\end{align*}
This simplifies the right hand side of the desired inequalities. In the first case, we have $\tau(x_1,x_2)=\tau(\hx_1,\hx_2)$ and $\tau(x_1,x_3)=\tau(\hx_1,\hx_3)$, so we can apply \Cref{prop: law of cosines monotonicity}. 
This yields 
\[
\tilde{\ma}_{\hx_1}(\hx_2,\hx_3)\geq \tilde{\ma}_{x_1}(x_2,x_3) \Leftrightarrow \tau(\hx_2,\hx_3) \leq \tau(x_2,x_3) \, .
\]
The second equivalence follows similarly.
\end{proof}

\section{Outlook and applications}
A careful reader surely noticed that all preparatory results deal with strongly long maps, while the Majorisation Theorem itself only yields the existence of a long map. 
Indeed, while longness is inherited in the limit as a (non-strict) inequality, the situation becomes more complicated with a relation. 
More precisely, longness in the limit is gained by the error for the inequality going to zero. 
Being causally related or not is a binary question and hence difficult to quantify like that. 
If the maps $f_n$ along the sequence were continuous then longness improves to strong longness. 
They are, however, not continuous, a problem that is encountered very often in nonsmooth Lorentzian geometry. 

An exciting application of this work is the possible impact on causal set theory. 
In short, causal set theory is a discrete approach to quantum gravity. 
A causal set is a locally finite partially ordered set, and can be equipped with the structure of a \LpLSn, cf.\ \cite[Subsection 5.3]{KS18}. 
We refer to \cite{Sum19} for a recent overview about this topic. 
Causal sets are used to discretise and approximate spacetimes via a process that is known as \emph{Poisson sprinkling}. 
While a specific causal set obtained by sprinkling a spacetime will likely not have curvature bounds, simply due to elementary probability theory, it is reasonable to believe that if the spacetime had curvature bounds to begin with, then this is reflected in the probability with which a sprinkling has curvature bounds as well. 
Clearly, this is interesting to investigate for both upper and lower curvature bounds. 
Recently, a partial proof of the so-called \emph{Hauptvermutung} of causal set theory has been achieved using the machinery of Lorentzian length spaces \cite{MS25+}. 

The Majorisation Theorem has several applications itself. 
For example, it is used in proving a synthetic version of the famous Kirszbraun Theorem \cite{Kir34, LS97, AKP11}. 
This of course ponders the question about an extension theorem for anti-Lipschitz maps -- a Lorentzian version of the Kirszbraun Theorem. 
A McShane-type extension result for steep functions can be regarded as the first step in this direction \cite[Lemma 3.5]{Octet}. 
However, even for maps from Minkowski space to itself this is significantly more complicated. 
For a true nonsmooth version, some further machinery is required. 

Finally, we want to mention another result which makes use of the Majorisation Theorem more prominently, namely an upper bound on the length of curves in CAT($k$) spaces depending on their total curvature \cite{ML03, CM16}. 
A Lorentzian analogue to the total curvature of a curve is not an issue and hence a corresponding result about the minimal $\tau$-length of a causal curve (with, e.g., a prescribed distance between the endponts), seems feasable. 

\begin{acknowledgements}
TB was supported by the Austrian Science Fund (FWF) [Grant DOI 10.55776/PAT1996423 and 10.55776/EFP6
].

FR acknowledges the support of the European Union - NextGenerationEU, in the framework of the PRIN Project `Contemporary perspectives on geometry and gravity' (code 2022JJ8KER – CUP G53D23001810006). The views and opinions expressed are solely those of the authors and do not necessarily reflect those of the European Union, nor can the European Union be held responsible for them.

For open access purposes, the authors have applied a \href{https://creativecommons.org/licenses/by/4.0/}{Creative Commons Attribution 4.0 International} license to any author accepted manuscript version arising from this submission.
\end{acknowledgements}

\bibliographystyle{abbrv}
\addcontentsline{toc}{section}{References}
\bibliography{references} 
\end{document}